\newtheorem{satz}{Theorem}[section]
\newtheorem{bem}[satz]{Remark}
\newtheorem{kor}[satz]{Corollary}
\newtheorem{lem}[satz]{Lemma}
\newtheorem{bei}[satz]{Example}
\newtheorem{pro}[satz]{Proposition}
\newcommand{\filt}[6]{
\[
\begin{xy}
\xymatrix@R20pt@C20pt{
&\mathbb{C}^3&\\\langle #1,#2 \rangle\ar[ru]&\langle #3,#4\rangle\ar[u]&\langle #5,#6\rangle\ar[lu]\\\langle #1\rangle\ar[u]&\langle #3\rangle\ar[u]&\langle #5\rangle\ar[u]}
\end{xy}
\]
}
\newcommand\Qn{\mathbb{Q}}
\newcommand\Zn{\mathbb{Z}}
\newcommand\Nn{\mathbb{N}}
\newcommand{\Sc}[2]{\langle #1,#2\rangle}
\newcommand{\Hom}{\mathrm{Hom}}
\begin{document}

\title{On the Euler characteristic of Kronecker moduli spaces}

\author{Thorsten Weist}
\date{\today}
\email{weist@math.uni-wuppertal.de}
\begin{abstract}
Combining the MPS degeneration formula for the Poincaré polynomial of moduli spaces of stable quiver representations and localization theory, it turns that the determination of the Euler characteristic of these moduli spaces reduces to a combinatorial problem of counting certain trees. We use this fact in order to obtain an upper bound for the Euler characteristic in the case of the Kronecker quiver. We also derive a formula for the Euler characteristic of some of the moduli spaces appearing in the MPS degeneration formula. 
\end{abstract}
\maketitle

\section{Introduction}
\noindent In \cite{mps}, a remarkable formula, called MPS degeneration formula in the following, for the Poincaré polynomial of a smooth compact moduli space of stable quiver representations was derived. Specializing at one this gives a formula for the Euler characteristic. Fixed a dimension vector, this formula reduces the calculation of the Poincaré polynomial (resp. Euler characteristic) to the calculation of an alternating sum of Poincaré polynomials of moduli spaces of a related quiver $\bar Q$ obtained by splitting up all vertices in a certain way. The advantage of this reduction is that only dimension vectors of type one have to be considered, i.e. $d_q\in \{0,1\}$ for all $q\in\bar Q_0$.

In combination with the localization theorem of \cite{wei}, which reduces the calculation of the Euler characteristic of these moduli spaces to torus fixed point components, it follows that the Euler characteristic is already obtained by counting tree shaped subquivers of the universal cover of $\bar Q$ which allow stable representations. The reason for this is that fixed point components under a torus action are given by moduli spaces of the universal covering quiver which is tree shaped. 

In this paper we mostly concentrate on the Kronecker quiver having two vertices and $m\geq 3$ arrows between them. For coprime dimension vectors $(a,b)$, the moduli spaces of stable representations are smooth projective varieties, denoted by $M_{a,b}^s(K(m))$. Based on ideas suggested by M. Douglas, for $b\approx ka$ the Euler characteristic is conjectured to be asymptotically given by 
\[\lim_{a\rightarrow\infty}\frac{1}{a}\ln\chi(M_{a,b}^s(K(m)))=\frac{K}{\sqrt{m-2}}\sqrt{k(m-k)-1}\]
with $K=(m-1)^2\ln((m-1)^2)-(m^2-2m)\ln(m^2-2m)$, see \cite{wei} for more details. 

Even if we cannot verify this conjecture at the moment, in the third section we obtain an upper bound for the Euler characteristic of Kronecker moduli spaces for coprime dimension vectors. To do so, by disregarding stability, we determine an upper bound for the number of tree shaped subquivers of the universal covering which are compatible with some fixed dimension vector. In summary, we obtain     
\[\lim_{a\rightarrow\infty}\frac{1}{a}\ln\chi(M_{a,b}^s(K(m)))\leq(k+1)(\ln(m)+\ln 2+1)-(k-1)\ln k.\] 
Since we have $0<k<m$ if the moduli space is not-empty, fixing $k$ and choosing $m\gg 0$ large enough this coincides with the asymptotic value obtained in \cite{oka}.\\

The fourth section is dedicated to the investigating of localization data, i.e. tuples consisting of a finite subquiver of the universal cover and a dimension vector which corresponds to fixed point components of quiver moduli spaces. We use the fact that the quiver $\overline{K(m)}$ is a complete bipartite quiver and that the stability is given by choosing a level structure on the vertices. Fixed a localization data it turns out that one can recursively split up its vertices, say of level $l$, into vertices of level $1$ and $l-1$ respectively in order to obtain a localization data with the induced level structure. This construction gives a connection between localization data coming along with an arbitrary level structure and localization data with trivial level structure.\\

The dimension vectors appearing in the MPS degeneration formula are given by tuples of weighted partitions of the original dimension vector. In the last section we consider the cases of the partitions $(a=1\cdot a,ka+1=1\cdot (ka+1))$ for which it is possible to determine the Euler characteristic of the moduli spaces $M^s_{(1\cdot a,1\cdot (ka+1))}(\overline{K(m)})$ exactly. If the conjecture of Douglas is true, the result shows that it does not suffice to consider the trivial partition in order to determine the asymptotic value of the Euler characteristic.  
\\\\
{\it Acknowledgements:} I would like to thank S. Okada and M. Reineke for valuable discussions on this topic. I would also like to thank L. Boos and M. Kuschkowitz for inspirational discussions on the combinatorics treated in this paper.

\section{Recollections and Notation}\label{allg}
\noindent In this section, we introduce notation and recollect several results which are important for the remaining part of the paper.  For an introduction to moduli spaces of representations of quivers, we refer to \cite{ReICRA}.

In this paper we restrict to bipartite quivers $Q=(Q_0,Q_1)$ with vertices $Q_0=I\cup J$ and $m(i,j)$ arrows between $i\in I$ and $j\in J$. 
On $\mathbb{Z}Q_0$ we define a (non-symmetric) bilinear form, called the Euler form, by 
\[\langle d,e\rangle:=\sum_{q\in Q_0}d_qe_q-\sum_{\alpha:i\rightarrow j}d_ie_j\]
for $d,\,e\in\mathbb{Z}Q_0$. By \[N_q:=\{q'\in Q_0\mid\exists\alpha:q\rightarrow q'\vee\alpha:q'\rightarrow q\}\]
we denote the set of neighbours of $q\in Q_0$.

For a representation $X$ of the quiver $Q$ we denote by $\underline{\dim} X\in\Nn Q_0$ its dimension vector. Moreover, we choose a level $l:Q_0\rightarrow\Nn_+$ on the set of vertices. Define two linear forms $\Theta, \kappa\in\Hom(\Zn Q_0,\Zn)$ by $\Theta(d)=\sum_{i\in I}l(i)d_i$ and $\kappa(d)=\sum_{q\in Q_0}l(q)d_q$.

Finally, we define a slope function $\mu:\Nn Q_0\backslash\{0\}\rightarrow\Qn$ by \[\mu(d)=\frac{\Theta(d)}{\kappa(d)}.\]
For a representation $X$ of the quiver $Q$ we define $\mu(X):=\mu(\underline{\dim}X)$. A representation $X$ of $Q$ is semistable (resp. stable) if for
all proper subrepresentations $0\neq U \subsetneq X$ the following holds:
\[\mu(U)\leq \mu(X)\text{ (resp. }\mu(U) < \mu(X)).\]
Fixing a slope function as above, we denote by $R^{sst}_d(Q)$ the set of semistable points and by $R^{s}_d(Q)$ the set of stable points in the affine variety $R_d(Q)$ of representations of dimension $d\in\Nn Q_0$. Moreover, let $M^{s}_d(Q)$ (resp. $M^{sst}_d(Q)$) be the moduli space of stable (resp. semistable) representations. In the following we restrict to $\Theta$-coprime dimension vectors, i.e. dimension vectors $d\in\Nn Q_0$ such that $\mu(e)\neq \mu(d)$ for all $0\neq e<d$. In this case semistability and stability coincide, and by \cite{kin} we have that $M^s_d(Q)$ is a smooth projective variety. Note that if $\kappa=\dim$, we obtain the usual definition of stability.\\

In this setup it is easy to check that a representation $X\in R_d(Q)$ is stable if and only if we have
\[\sum_{j\in J} l(j)d'_j>\frac{\sum_{j\in J} l(j)d_j}{\sum_{i\in I} l(i)d_i}\sum_{i\in I} l(i)d'_i\]
for all proper subrepresentations $U\subset X$  where $d'=\underline{\dim} U$.
 
For a vertex $r\in Q_0$ denote by $A_r\subseteq Q_1$ the set of arrows $\alpha$ such that $r$ is a head or tail of $\alpha$. Fixing a vertex $r\in Q_0$ we consider the quiver $Q(r)$ with vertices
\[Q(r)_0=Q_0\backslash r\cup\{r_{l,m}\mid (l,m)\in\Nn_+^2\}\]
and arrows
\begin{eqnarray*}
Q(r)_1&=&Q_1\backslash A_r\cup\{\alpha_1,\ldots,\alpha_l:i\rightarrow r_{l,m}\mid \alpha:i\rightarrow r,\,m\in\Nn_+\}\\&&\cup\{\alpha_1,\ldots,\alpha_l:r_{l,m}\rightarrow j\mid\alpha:r\rightarrow j,\,m\in\Nn_+\}.
\end{eqnarray*}
Moreover, we define the level $l:Q(r)_0\rightarrow\mathbb{N}$ on $Q(r)_0$ by $l(r_{l,m})=l$ whence it coincides with the original one on the remaining vertices. We again consider the stability induced by the new level.

If we fix a dimension vector $d$ and a weighted partition $d_r=\sum_{l=1}^plk_l$, this induces  a dimension vector $\bar{d}$ of $Q(r)$ in the following way: we set $\bar{d}_q=d_q$ for all $q\neq r$ and $\bar d_{r_{l,m}}=1$ for $1\leq l\leq p$ and $1\leq m\leq k_l$ and $\bar d_{r_{l,m}}=0$ otherwise. If we think of a dimension vector of $Q(r)$, from now on we think of a tuple consisting of a dimension vector of $Q$ and weighted partition of $d_r$.

Let $P_d^{Q}(v)$ the Poincaré polynomial in singular cohomology of the moduli space $M^s_d(Q)$. By \cite{mps}, see also \cite{rsw} for a more general setting, we have:
\begin{satz}\label{mpssatz}
For every $\Theta$-coprime dimension vector $d$ we have
\[v^{\Sc{d}{d}-1}P_d^Q(v)=\sum_{\sum lk_l=d_r}P_{\bar{d}}^{Q(r)}(v)v^{\Sc{\bar{d}}{\bar{d}}-1}\left(\prod_l\frac{1}{k_l!}\left(\frac{(-1)^{l-1}(y-y^{-1})}{l(y^l-y^{-l})}\right)^{k_l}\right).\]
\end{satz}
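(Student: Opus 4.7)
The plan is to follow \cite{mps} and its mathematical treatment \cite{rsw}, proving the identity by matching two applications of Reineke's Hall-algebra formula --- one for $Q$ and one for the split quiver $Q(r)$ --- via a plethystic (cycle-index) expansion at the vertex $r$. Throughout, one passes to counts of $M^s_d(Q)(\mathbb{F}_q)$ with the specialisation $v^2=q$, which recovers the Poincaré polynomial in the $\Theta$-coprime case because the moduli space is then smooth and projective.

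First I would recall Reineke's resolution: in the completed Hall algebra, the total generating series of representations of a quiver factorises, in decreasing slope order, as an ordered product of semistable generating series weighted by Poincaré polynomials, each divided by the relevant $q$-factorial $\prod_q \prod_{i=1}^{d_q}(1-v^{-2i})$. The $\Theta$-coprime hypothesis ensures that semistability coincides with stability at slope $\mu(d)$ and that Reineke's inversion extracts $P^Q_d(v)$ on the one hand and $P^{Q(r)}_{\bar d}(v)$ on the other. The level $l(r_{l,m})=l$ is calibrated so that $\mu(\bar d)=\mu(d)$; this transfers $\Theta$-coprimality from $d$ to $\bar d$ and makes the two inversions compatible, since every proper sub-dimension vector of $\bar d$ on $Q(r)$ projects to one of $d$ on $Q$ with the same slope.

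Second, the combinatorial core is a local identity at $r$. A weighted partition $d_r=\sum lk_l$ corresponds to choosing a refined Jordan-type datum at $r$; summing over such data in the Hall algebra produces an expansion in which each part of size $l$ contributes the motivic single-particle factor
\[
\frac{(-1)^{l-1}(y-y^{-1})}{l(y^l-y^{-l})},
\]
arising from the $q$-centralizer of a single Jordan block of size $l$ after symmetrising $q=v^2$ into the form $y^l-y^{-l}$. Grouping $k_l$ indistinguishable parts of size $l$ produces the symmetric prefactor $1/k_l!$; the $l$-fold multiplication of arrows at $r_{l,m}$ in the definition of $Q(r)$ records the arrows incident to such a part in the lift; and the shift $\Sc{\bar d}{\bar d}-\Sc{d}{d}$ tracks the resulting change in Euler form between the two sides. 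Substituting this local expansion into Reineke's identity for $Q$ and matching term-by-term against Reineke's identity for $Q(r)$ yields the claimed formula after inversion.

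The main obstacle is the precise justification of the single-particle factor: one must show that the formal plethystic logarithm of the generating series over all partitions at $r$ recovers exactly $(y-y^{-1})/(l(y^l-y^{-l}))$ with the correct sign $(-1)^{l-1}$, which comes from the Möbius function of the partition poset (equivalently, from the plethystic logarithm). A secondary subtlety is the simultaneous bookkeeping of the three sources of $v$-powers --- the Euler-form difference $\Sc{\bar d}{\bar d}-\Sc{d}{d}$, the $q$-factorials normalising the Hall-algebra series at $r$, and the centralizer orders of the Jordan type --- and checking that they collapse to the single compact product displayed in the theorem. Both are handled in \cite{mps} by a wall-crossing argument and in \cite{rsw} by a direct Hall-algebra computation.
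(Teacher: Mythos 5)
The paper does not prove Theorem \ref{mpssatz} at all: it is imported verbatim from \cite{mps}, with \cite{rsw} cited for a mathematical proof in a more general setting, so there is no in-paper argument to compare yours against. Your outline is consistent with the route taken in \cite{rsw} (Hall-algebra/HN-recursion identities for $Q$ and $Q(r)$, the level $l(r_{l,m})=l$ calibrated so that slopes and hence $\Theta$-coprimality are compatible, and a local expansion at $r$ over weighted partitions producing the factors $\tfrac{1}{k_l!}\bigl(\tfrac{(-1)^{l-1}(y-y^{-1})}{l(y^l-y^{-l})}\bigr)^{k_l}$), and the structural points you check --- that proper sub-dimension vectors of $\bar d$ project to proper sub-dimension vectors of $d$ of the same slope, and that coprimality gives smooth projective moduli so that point counts determine $P^Q_d$ and $P^{Q(r)}_{\bar d}$ --- are the right compatibilities to verify. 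However, be aware that what you label the ``main obstacle'' is in fact the entire content of the theorem: the derivation of the single-particle factor with its sign, the $1/k_l!$ symmetrisation, and the collapse of the Euler-form shift $\Sc{\bar d}{\bar d}-\Sc{d}{d}$ against the centralizer and $q$-factorial contributions are asserted and then deferred to \cite{mps} and \cite{rsw} rather than carried out. As a citation-level justification this puts you on exactly the same footing as the paper itself; as a self-contained proof it is an outline with the decisive computation still missing.
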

Denoting by $\chi$ the Euler characteristic (in singular cohomology) we get the following corollary:
\begin{kor}\label{mpseuler}We have
\[\chi(M_d^s(Q))=\sum_{\sum lk_l=d_r}\chi(M_{\bar{d}}^s(Q(r)))\prod_l\frac{(-1)^{k_l(l-1)}}{k_l!l^{2k_l}}.\]

\end{kor}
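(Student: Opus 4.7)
The plan is to derive the corollary directly from Theorem~\ref{mpssatz} by specializing at $v=1$. Since $M_d^s(Q)$ and $M_{\bar d}^s(Q(r))$ are smooth projective varieties, the Poincaré polynomials satisfy $P_d^Q(1) = \chi(M_d^s(Q))$ and $P_{\bar d}^{Q(r)}(1) = \chi(M_{\bar d}^s(Q(r)))$; moreover, the prefactors $v^{\langle d,d\rangle - 1}$ and $v^{\langle \bar d,\bar d\rangle - 1}$ both reduce to $1$ at $v=1$.

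The only non-trivial point is evaluating the rational factor
\[\frac{(-1)^{l-1}(y-y^{-1})}{l(y^l - y^{-l})}\]
at the specialization point (where numerator and denominator vanish simultaneously). Expanding about $y=1$, one obtains $y - y^{-1} = 2(y-1) + O((y-1)^2)$ and $y^l - y^{-l} = 2l(y-1) + O((y-1)^2)$, so the limit is $(-1)^{l-1}/l^2$. Raising to the $k_l$-th power and multiplying by $1/k_l!$ yields exactly $(-1)^{k_l(l-1)}/(k_l!\, l^{2k_l})$; taking the product over $l$ produces the combinatorial weights on the right-hand side of the corollary.

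Since the sum on the right-hand side of Theorem~\ref{mpssatz} is finite (it is indexed by weighted partitions of $d_r$), specialization commutes with summation and the identity follows term-by-term. The only obstacle, such as it is, is the removable-singularity calculation above; otherwise the corollary is a direct consequence of Theorem~\ref{mpssatz} and requires no additional geometric or combinatorial input.
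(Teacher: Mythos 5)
Your proposal is correct and follows essentially the same route as the paper: the entire content of the paper's proof is the removable-singularity evaluation of the factor $\frac{(-1)^{l-1}(y-y^{-1})}{l(y^l-y^{-l})}$ at the specialization point, which the paper does by factoring it as $\frac{(-1)^{l-1}y^{l-1}(y+1)}{l(\sum_{i=0}^{l-1}y^i)(y^l+1)}$ and reading off the value $(-1)^{l-1}/l^2$, while you obtain the same limit by a first-order expansion about $y=1$. The remaining steps you spell out (specializing $P$ at $1$ to get $\chi$, the prefactors becoming $1$, and finiteness of the sum) are exactly what the paper leaves implicit.
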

\begin{proof}
We have
\[\frac{(y-y^{-1})}{l(y^l-y^{-l})}=\frac{y^{l-1}(y^2-1)}{l(y^{2l}-1)}=\frac{y^{l-1}(y+1)}{l(\sum_{i=0}^{l-1}y^i)(y^l+1)}.\]
\end{proof}
Clearly, this construction can successively be applied to every vertex. Doing this, the resulting quiver is denoted by $\bar Q$ in the following.
 
Let $\tilde{Q}$ be the universal covering quiver of $Q$. Recall that each vertex of $\tilde Q$ corresponds uniquely to a vertex of $Q$. Denoting by $Q(q)$ those vertices of $\tilde Q$ corresponding to $q\in Q_0$ a dimension vector $\tilde{d}\in\Nn\tilde{Q}_0$ is called compatible with $d\in\Nn Q_0$ if $\sum_{q'\in\mathcal{Q}(q)}\tilde{d}_{q'}=d_q$ for every vertex $q\in Q_0$. By \cite[Corollary 3.14]{wei} we have:
\begin{satz}\label{euler} We have
\[\chi(M^{s}_d(Q))=\sum_{\tilde{d}}\chi(M^{s}_{\tilde{d}}(\tilde{Q})),\]
where $\tilde{d}$ ranges over all equivalence classes being compatible with $d$, and the slope function considered on $\tilde{Q}$ is the one induced by the slope function fixed on $Q$.
\end{satz}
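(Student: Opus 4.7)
The plan is to apply the classical localization principle: if $T$ is a torus acting on a smooth projective variety $X$, then $\chi(X)=\chi(X^T)$, a consequence of the Bialynicki--Birula decomposition. The task then reduces to exhibiting a torus $T$ acting on $M^s_d(Q)$ whose fixed locus breaks up, up to $\pi_1(Q)$-equivalence, into the moduli spaces $M^s_{\tilde d}(\tilde Q)$ for compatible $\tilde d$.

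First I would build the torus. Fixing a spanning tree of the underlying graph of $Q$, the remaining arrows freely generate $\pi:=\pi_1(Q)$, the deck transformation group of $\tilde Q\to Q$. Set $T:=\Hom(\pi,\mathbb{C}^*)$. Each character $\chi\in T$ assigns a scalar $\chi(\alpha)\in\mathbb{C}^*$ to every arrow $\alpha\in Q_1$ via its chosen lift to $\tilde Q_1$, and we let $T$ act on $R_d(Q)$ by rescaling arrows. This action commutes with the base-change group, hence descends to $M^s_d(Q)$; the slope function depends only on dimension vectors and is $T$-invariant, so stability is preserved.

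Next I would identify the fixed locus. A stable representation $M$ is $T$-fixed iff for every $\chi$ there is a base change $g(\chi)$ with $g(\chi)\cdot M=\chi\cdot M$; stability together with Schur's lemma pins down $g(\chi)$ up to a scalar and yields a homomorphism $\pi\to G_d/\mathbb{C}^*$. Diagonalising this representation produces a grading
\[M_q\;=\;\bigoplus_{\tilde q\in\Qc(q)}V_{\tilde q}\]
respected by every arrow via its canonical lift, i.e.\ $M$ is the pushdown of a representation $\tilde M$ of $\tilde Q$ whose dimension vector $\tilde d$ is compatible with $d$. Since subrepresentations of $\tilde M$ pushdown to $\pi$-homogeneous subrepresentations of $M$, and conversely every $\pi$-homogeneous subrepresentation of $M$ lifts, the slope function induced on $\tilde Q$ makes $\tilde M$ stable iff $M$ is stable. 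Two lifts have isomorphic pushdowns iff their dimension vectors differ by the natural $\pi$-action on $\Nn\tilde Q_0$, which is exactly the equivalence relation parametrising the sum in the statement.

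The main obstacle is to upgrade this bijection on closed points to a scheme-theoretic decomposition
\[M^s_d(Q)^T\;\cong\;\bigsqcup_{[\tilde d]}M^s_{\tilde d}(\tilde Q),\]
so that one may equate Euler characteristics componentwise with $\chi(M^s_d(Q)^T)=\chi(M^s_d(Q))$. This requires showing that the pushdown morphism represents an \'etale equivalence of the two moduli functors and that universal families glue correctly across equivalence classes; these points are precisely the content of \cite[Corollary 3.14]{wei}. Once this identification is established, summing Euler characteristics over the connected components of the fixed locus delivers the claimed formula.
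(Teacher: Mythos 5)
The paper itself offers no argument for this statement---it is quoted verbatim from \cite[Corollary 3.14]{wei}---so the relevant comparison is with the proof in that reference, whose overall strategy (a torus rescaling arrows, fixed points as graded representations, $\chi(X)=\chi(X^T)$) your outline correctly reproduces. There are, however, two genuine gaps. First, the weight decomposition of a $T$-fixed stable representation is indexed by the character lattice of $T=\Hom(\pi_1(Q),\mathbb{C}^*)$, which is the abelianization $\pi_1(Q)^{\mathrm{ab}}\cong H_1(Q,\mathbb{Z})$, not by $\pi_1(Q)$ itself. Hence a single localization step identifies the fixed locus with moduli spaces of the universal \emph{abelian} covering quiver $\hat Q$, not of the universal cover $\tilde Q$; your grading $M_q=\bigoplus_{\tilde q}V_{\tilde q}$ indexed by the fibres of $\tilde Q\to Q$ does not follow from diagonalising the $T$-action. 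The distinction is not cosmetic: for $K(m)$ with $m\geq 3$ the fundamental group is free of rank $m-1\geq 2$, the abelian cover contains cycles (hexagons already for $m=3$), and such cycles support stable type-one representations, so fixed components of one localization step are genuinely not all of the form $M^s_{\tilde d}(\tilde Q)$. The cited proof bridges this by iterating the localization (each fixed component again carries a torus action, and finite support of the compatible dimension vectors makes the iteration terminate), and this passage from $\hat Q$ to $\tilde Q$ is absent from your argument.

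Second, your final paragraph delegates the scheme-theoretic identification of the fixed locus to \cite[Corollary 3.14]{wei}---but that corollary \emph{is} the statement being proven, so as written the argument is circular. To make the proposal self-contained you would have to establish the isomorphism of fixed components with covering-quiver moduli directly: lift the projective $T$-action to a linear one (possible since extensions of tori by $\mathbb{C}^*$ split), check that the induced slope on the cover matches stability of pushdowns and lifts as you sketch, verify that the deck-group action on compatible dimension vectors accounts exactly for isomorphic pushdowns, and only then sum Euler characteristics over components and over the iteration. The use of $\chi(X)=\chi(X^T)$ itself is unproblematic (it holds for torus actions on complex varieties, and here $M^s_d(Q)$ is smooth projective by $\Theta$-coprimality), so the missing content is precisely the two points above.
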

 A tuple consisting of a finite subquiver $\mathcal{Q}$ of the universal covering quiver $\tilde{Q}$ of $Q$ and a dimension vector $d\in\Nn\mathcal{Q}_0$ with $d_q\neq 0$ for all $q\in \mathcal Q_0$ is called localization data if  $M^{s}_{\tilde{d}}(\mathcal{Q})\neq\emptyset$.

\section{An upper bound}\label{upper}
\noindent  In this section, by combining Corollary \ref{mpseuler} and Theorem \ref{euler}, we determine an upper bound for the Euler characteristic of Kronecker moduli spaces, i.e. moduli spaces of stable representations of the generalized Kronecker quiver. In combination with \cite[Section 6]{wei} this shows that the Euler characteristic grows exponentially. Moreover, we compare the result to the conjecture of Douglas.\\

By $K(m)$ we denote the $m$-Kronecker quiver with vertices $i$ and $j$ and $m$ arrows $\alpha_1,\ldots,\alpha_m:i\rightarrow j$. In the following we assume that $m\geq 3$. Recall that for $m\leq 2$ all moduli spaces $M^s_{(a,b)}(K(m))$ are, if not empty, zero- or one-dimensional,  see also the dimension formula in Remark \ref{isom}.

Consider the quiver $\mathcal{M}_m$ defined by the vertices
\[I=\{i_{l,k}\mid (l,k)\in\Nn^2\}\cup\{j\}\]
which has $m\cdot p$ arrows between $i_{p,s}$ and $j$ for all $s$. Consider the stability condition given by the level function $l(i_{l,k})=l$ for all $k$.
Moreover, consider the quiver $\mathcal{N}_m$ defined by the vertices
\[I=\{i_{l,k}\mid (l,k)\in\Nn^2\},\,J=\{j_{l,k}\mid (l,k)\in\Nn^2\}\]
which has $m\cdot p\cdot q$ arrows between $i_{p,s}$ and $j_{q,t}$ for all $s,t$. Consider the stability condition given by the level function $l(i_{l,k})=l(j_{l,k})=l$ for all $k$. \\

We should mention that this definition of stability is actually the one going back to considerations of A. Schofield in \cite{sch} which ensure that the moduli spaces of Schur roots are not empty:
\begin{lem}\label{scho}
A representation of dimension $d$ of $\mathcal{N}_m$ is stable if and only if it is stable in King's sense with the linear form defined by $\Theta_{d}(e):=\Sc{e}{d}-\Sc{d}{e}$. 
\end{lem}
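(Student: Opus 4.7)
The statement is an equivalence between the paper's $\mu$-stability on $\mathcal{N}_m$ (defined via the level function $l$) and King's $\theta$-stability with $\theta=\Theta_d$, Schofield's canonical antisymmetric pairing. The proof is a direct unpacking of both definitions, using the very special bipartite structure of $\mathcal{N}_m$ in which the arrow multiplicity factors as a product of level weights.

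First, using the Euler form on the bipartite quiver $\mathcal{N}_m$, I would compute
\[\Theta_d(e)=\langle e,d\rangle-\langle d,e\rangle=\sum_{i\in I,\,j\in J}m(i,j)\bigl(d_ie_j-e_id_j\bigr),\]
and then insert the crucial identity $m(i_{p,s},j_{q,t})=m\cdot p\cdot q=m\cdot l(i_{p,s})\cdot l(j_{q,t})$. Because this multiplicity factors, the double sum splits as a product and collapses to
\[\Theta_d(e)=m\bigl(\Theta_I(d)\Theta_J(e)-\Theta_I(e)\Theta_J(d)\bigr),\]
where I write $\Theta_I(x)=\sum_{i\in I}l(i)x_i$ (the paper's $\Theta$) and $\Theta_J(x)=\sum_{j\in J}l(j)x_j=\kappa(x)-\Theta(x)$.

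Second, I would rearrange the paper's stability inequality recorded in the excerpt just after the definition of $\mu$: for a proper nonzero subrepresentation $U$ of dimension $e$, the condition $\Theta_J(e)>\frac{\Theta_J(d)}{\Theta_I(d)}\Theta_I(e)$ is, after clearing the positive denominator $\Theta_I(d)$, exactly $\Theta_I(d)\Theta_J(e)-\Theta_I(e)\Theta_J(d)>0$, i.e.\ $\Theta_d(e)>0$. Since $\Theta_d(d)=0$ by antisymmetry of the pairing $\langle\cdot,\cdot\rangle-\langle\cdot,\cdot\rangle^{\mathrm{op}}$, this is precisely what it means for the representation to be $\Theta_d$-stable in King's sense.

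The main obstacle is really just handling the degenerate cases cleanly: when $\Theta_I(e)=0$ (so $U$ is supported entirely on $J$) the paper's inequality needs to be read directly rather than through cross-multiplication, and similarly if $\Theta_I(d)=0$ one should argue via $\mu$ rather than clearing denominators. In each such case both stability conditions collapse to an obvious positivity statement ($\Theta_J(e)>0$ for nonzero $U$ supported on $J$, against $\Theta_I(d)>0$), so the equivalence persists. Beyond these bookkeeping points, the proof is a short direct calculation.
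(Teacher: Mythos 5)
Your proposal is correct and follows essentially the same route as the paper: cross-multiply the slope inequality $\Theta_J(e)>\frac{\Theta_J(d)}{\Theta_I(d)}\Theta_I(e)$ and use the factorization $m(i,j)=m\,l(i)\,l(j)$ to identify it, up to the positive factor $m$, with $\Theta_d(e)=\langle e,d\rangle-\langle d,e\rangle>0$. Your extra attention to the degenerate cases (e.g.\ subrepresentations supported on $J$) is harmless bookkeeping that the paper simply glosses over.
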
 
\begin{proof}Let $X$ be a representation of $\mathcal{N}_m$ of dimension $d$ and let $U$ be a subrepresentation of dimension $e$. Then we have
\[\frac{\sum_{j\in J}l(j)e_j}{\sum_{i\in I}l(i)e_i}>\frac{\sum_{j\in J}l(j)d_j}{\sum_{i\in I}l(i)d_i}\]
if and only if
\[\sum_{(i,j)\in I\times J}l(j)l(i)d_ie_j>\sum_{(i,j)\in I\times J}l(j)l(i)e_id_j.\]
Since we have $m(i,j)=ml(i)l(j)$ the claim follows.\end{proof}
 
\begin{bem}\label{isom}
\end{bem}
\begin{itemize}
\item By the results of \cite{sch} together with Lemma \ref{scho} we have that $M^s_{a,b}(K(m))\neq\{pt\}$ if and only if $(a,b)$ is an imaginary Schur root. By \cite{kac}, we have that $(a,b)$ is an imaginary Schur root if and only if
\[\frac{\textstyle m-\sqrt{m^2-4}}{\textstyle
2}<\frac{\textstyle b}{\textstyle a}<\frac{\textstyle
m+\sqrt{m^2-4}}{\textstyle 2}\text{ holds.}\]
In the following we only consider dimension vectors $(a,b)$ such that these inequalities hold. We then have $\dim\,M^s_{a,b}(K(m))=1-\Sc{(a,b)}{(a,b)}=1-a^2-b^2+abm.$
\item We will frequently make use of the well-known isomorphisms of Kronecker moduli spaces $M^s_{(a,b)}(K(m))\cong M^s_{(a,b)}(K(m))$ and $M^s_{(a,b)}(K(m))\cong M^s_{(a,ma-b)}(K(m))$. They are induced by the isomorphisms of the representation spaces obtained by taking the transpose of representations and the reflection functor introduced in \cite{bgp} respectively.\\
\end{itemize}

Every pair of weighted partitions $(a=\sum_{l}la_l,b=\sum_llb_l)$ defines a dimension vector $d$ of $\mathcal{N}_m$ by setting $d_{i_{l,k}}=1$ for $k=1,\ldots,a_l$ and $d_{i_{l,k}}=0$ otherwise (resp. $d_{j_{l,k}}=1$ for $k=1,\ldots,b_l$ and $d_{j_{l,k}}=0$ otherwise). In the following, we denote this dimension vector by $\overline{(a,b)}$. Applying Theorem \ref{mpseuler} to both vertices, for coprime $(a,b)$, we obtain
\[\chi(M_{a,b}^s(K(m)))=\sum_{\substack{\sum la_l=a\\\sum lb_l=b}}\chi(M_{\overline{(a,b)}}^s(\mathcal{N}_m))\prod_l\frac{(-1)^{(a_l+b_l)(l-1)}}{a_l!b_l!l^{2(a_l+b_l)}}.\]

In the following, all dimension vectors $(a,b)$ of $K(m)$ are assumed to be coprime. Fixed a weighted partition $a=\sum_{l}la_l$ define $\hat a=\sum_{l}a_l$ and  $\tilde a=a-\hat a=\sum_l(l-1)a_l$. Starting with this formula the next step is to apply the localization theorem to the moduli spaces $M_{\overline{(a,b)}}^s(\mathcal{N}_m)$.\\

Fixed a pair of weighted partitions $(a=\sum_lla_l,b=\sum_llb_l)$, we consider the quiver $Q(\sum_lla_l,\sum_llb_l)$ with labelled vertices $I\cup J$ with $I=\bigcup_{k=1}^aI_k$, $J=\bigcup_{k=1}^bJ_k$ and $|I_k|=a_k,\,|J_k|=b_k$ and, moreover, having $mln$ arrows going from $i$ to $j$ whenever $i\in I_l$ and $j\in J_n$. The stability is given by the level defined by $l(i)=k$ for every $i\in I_k$ and $l(j)=k$ for every $j\in J_k$. This quiver is just the support of the pair of fixed weighted partitions understood as dimension vector of $\mathcal{N}_m$.

Each localization data $(\mathcal{Q},d)$  with sources $I$ and sinks $J$ such that $d$ is compatible with $\overline{(a,b)}$ corresponds to a connected subtree of $\tilde{\mathcal{N}}_m$. Actually this subtree is already obtained from $Q(\sum_lla_l,\sum_llb_l)$ by deleting certain arrows because we have $d_q=1$ for all $q\in\mathcal{Q}_0$. Because of this we also have $\chi(M^s_{\overline{(a,b)}}(\mathcal{Q}))=1$ because the moduli space is a point. 
\begin{bem}\label{bem1}
\end{bem}
\begin{itemize}
\item Every localization data $(\mathcal Q,d)$ comes with a colouring of the arrows $c:\mathcal{Q}_1\rightarrow Q_1$. If we forget about this colouring of the arrows of some localization data $(\mathcal Q,d)$, with each vertex $q\in \mathcal{Q}_0$ we can associate the number
\[v(q):=\prod_{q'\in N_q}l(q').\] Define $v((\mathcal{Q},d)):=\prod_{q\in\mathcal{Q}_0}v(q).$ Note that, fixed an uncoloured localization data a colouring can also be understood as an embedding into the universal cover. 

Thus if $(Q,d)$ is an uncoloured localization data  of $\mathcal{N}_m$ compatible with $(a=\sum_lla_l,b=\sum_llb_l)$ the number of different colourings of the arrows is given by
\[m^{\hat a+\hat b-1}v((\mathcal{Q},d)).\]
Indeed, every such localization data is forced to have $\hat a+\hat b-1$ arrows.  If we denote the set of uncoloured localization data compatible with this pair by $\mathcal{L}_{(a,b)}(\mathcal{N}_m)$ we obtain
\[\chi(M^s_{\overline{(a,b)}}(\mathcal{N}_m))=m^{\hat a +\hat b-1}\sum_{(\mathcal{Q},d)\in \mathcal{L}_{(a,b)}(\mathcal{N}_m)}v((\mathcal{Q},d)).\]
\item As far as the quiver $\mathcal{N}_m$ is concerned we only consider dimension vectors of type one. Thus every localization data is also of type one. Therefore, it is uniquely determined by its quiver and we will sometimes skip the dimension vector.\\
\end{itemize}

\begin{bei}
\end{bei}
\noindent Consider the dimension vector $(2,3)$. Then we have to consider the uncoloured localization data
\[
\begin{xy}
\xymatrix@R16pt@C15pt{
j_{3,1}&&&&j_{3,1}&&&&j_{1,1}&&j_{2,1}\\i_{2,1}\ar[u]&&i_{1,1}\ar[rru]&&&&i_{1,2}\ar[llu]&&&i_{1,1}\ar[lu]\ar[ru]&&i_{1,2}\ar[lu]}
\end{xy}
\]\\
corresponding to the pairs of partitions $(2\cdot 1,3\cdot 1),\,(1\cdot 2,3\cdot 1),\,(1\cdot 2,1\cdot 1+2\cdot 1)$ and the uncoloured localization data
\[
\begin{xy}
\xymatrix@R16pt@C15pt{
j_{1,1}&j_{1,2}&j_{1,3}&j_{1,1}&&j_{2,1}&j_{1,1}&&j_{1,2}&&j_{1,3}\\&i_{2,1}\ar[u]\ar[ru]\ar[lu]&&&i_{2,1}\ar[lu]\ar[ru]&&&i_{1,1}\ar[lu]\ar[ru]&&i_{1,2}\ar[lu]\ar[ru]}
\end{xy}
\]
corresponding to $(2\cdot 1,1\cdot 3),\,(2\cdot 1,1\cdot 1+2\cdot 1),\,(1\cdot 2,1\cdot 3)$. Then we get 
\[\chi(M^s_{2,3}(K(m)))=-\frac{m}{6}+\frac{m^2}{2}-m^3-\frac{1}{3}m^3+\frac{m^2}{2}+\frac{1}{2}m^4=\frac{1}{2}m^4-\frac{4}{3}m^3+m^2-\frac{m}{6}.\]
\\
With a connected (multi)graph $G=(V,E)$ with vertices $V$ and edges $E$ we can associate the number $\tau(G)$ of its spanning trees, i.e. subtrees of $G$ involving all vertices. Recall that the degree of a vertex $v\in V$ is the number of its incident edges. Each localization data compatible with a pair of weighted partitions $(a=\sum_lla_l,b=\sum_llb_l)$ defines a spanning tree of $Q(\sum_lla_l,\sum_llb_l)$. Thus the problem of finding an upper bound may be reduced to counting the number of spanning trees of $Q(\sum_lla_l,\sum_llb_l)$ for all weighted partitions of $a$ and $b$. The following two results are very useful for our purposes, see \cite{sco} and \cite{tho}:
\begin{satz}\label{spanning} \begin{enumerate}
\item Let $K_{a,b}:=(I+J,E)$ be the complete bipartite graph with $|I|=a$ and $|J|=b$, i.e. the graph having vertices $I\cup J$ and edges $(i,j)$ for all $i\in I,\,j\in J$. Then we have $\tau(K_{a,b})=a^{b-1}b^{a-1}$.
\item For the number of spanning trees $\tau(G)$ of a multigraph $G$ with vertices $q_1,\ldots, q_n$ of degrees $d_1,\ldots,d_n$ we have
$\tau(G)\leq d_1\ldots d_{n-1}.$
\end{enumerate} 
\end{satz}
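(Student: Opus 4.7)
My approach is to reduce both statements to linear algebra via the Matrix-Tree Theorem of Kirchhoff: for a connected multigraph $G$ on $n$ vertices the number of spanning trees equals $\det \tilde L$, where $\tilde L$ is obtained from the Laplacian $L = D - A$ (with $A_{ij}$ the edge-multiplicity between $i$ and $j$ and $D$ the diagonal degree matrix) by deleting any one row and the corresponding column.

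For part (1), I would diagonalize the Laplacian of $K_{a,b}$ directly. Arranging the vertices so that the $a$ vertices of one side come first, $L$ has the block form
\[
L = \begin{pmatrix} bI_a & -J \\ -J^T & aI_b \end{pmatrix},
\]
where $J$ denotes the $a \times b$ all-ones matrix. A short computation identifies three eigenspaces: vectors $(v,0)$ with $v \cdot \mathbf{1}_a = 0$ are eigenvectors with eigenvalue $b$, contributing a subspace of dimension $a-1$; symmetrically $(0,w)$ with $w \cdot \mathbf{1}_b = 0$ gives eigenvalue $a$ with multiplicity $b-1$; and the two-dimensional invariant subspace spanned by $(\mathbf{1}_a,0)$ and $(0,\mathbf{1}_b)$ is represented there by a $2\times 2$ matrix with trace $a+b$ and determinant zero, yielding the eigenvalues $0$ and $a+b$. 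The spectral form of Kirchhoff's theorem, $\tau(G) = \frac{1}{n}\prod_{\lambda \neq 0}\lambda$, then gives
\[
\tau(K_{a,b}) = \frac{1}{a+b} \cdot (a+b) \cdot a^{b-1} \cdot b^{a-1} = a^{b-1}b^{a-1}.
\]

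For part (2), I would observe that the reduced Laplacian $\tilde L$ obtained by deleting the row and column indexed by $q_n$ is a principal submatrix of the positive semidefinite Laplacian $L$, hence itself positive semidefinite. Its diagonal entries are precisely the degrees $d_1,\ldots,d_{n-1}$ of the remaining vertices. Hadamard's inequality for positive semidefinite matrices, $\det \tilde L \leq \prod_{i=1}^{n-1}\tilde L_{ii}$, then immediately yields $\tau(G) = \det \tilde L \leq d_1 \cdots d_{n-1}$.

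The only delicate step is the eigenvalue computation in part (1); everything else reduces to a direct application of standard tools once the Matrix-Tree Theorem is invoked. A purely combinatorial alternative for (1) would be a Pr\"ufer-style bijection between spanning trees of $K_{a,b}$ and pairs of sequences in $\{1,\ldots,a\}^{b-1}\times\{1,\ldots,b\}^{a-1}$, which might be closer to the combinatorial viewpoint of the rest of the paper; Hadamard's inequality in (2) has no natural combinatorial substitute that I am aware of.
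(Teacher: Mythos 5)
Your proof is correct, but note that the paper does not actually prove Theorem \ref{spanning}: both parts are quoted from the literature, the bipartite spanning-tree count from Scoins \cite{sco} and the degree-product bound from Thomassen \cite{tho}. So your argument supplies a self-contained justification rather than an alternative to a proof in the text. The details check out: the Laplacian of $K_{a,b}$ has eigenvalue $b$ with multiplicity $a-1$, eigenvalue $a$ with multiplicity $b-1$, and eigenvalues $0$ and $a+b$ from the two-dimensional subspace you describe, so the spectral form of Kirchhoff's theorem (applicable since $K_{a,b}$ is connected) gives $\tau(K_{a,b})=\frac{1}{a+b}(a+b)\,a^{b-1}b^{a-1}=a^{b-1}b^{a-1}$; and for part (2) the reduced Laplacian of a loopless multigraph is positive semidefinite with diagonal entries $d_1,\ldots,d_{n-1}$, so Hadamard's inequality yields $\tau(G)=\det\tilde L\leq d_1\cdots d_{n-1}$ exactly as you say (if loops were present the diagonal entries would only be smaller than the degrees, so the bound survives; and for disconnected $G$ both sides are trivially compatible since $\tau(G)=0$). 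Compared with the cited sources --- Scoins' combinatorial derivation of the bipartite analogue of Cayley's formula and Thomassen's graph-theoretic argument --- your route has the merit of handling both statements uniformly from the Matrix--Tree Theorem, at the cost of being less in the combinatorial spirit of the surrounding sections; the Pr\"ufer-type bijection you mention would indeed be the standard purely combinatorial substitute for part (1).
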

Note that $K_{a,b}$ is just $Q(1\cdot a,1\cdot b)$ when forgetting the orientation of the arrows. From this we get the following corollary:
\begin{kor}
Let $(\sum_lla_l,\sum_llb_l)$ be a pair of weighted partitions. Then we have 
\[\chi(M^s_{(1\cdot a,1\cdot b)}(\mathcal{N}_m))\leq m^{a+b-1}a^{b-1}b^{a-1}\]
and 
 \[\chi(M^s_{\overline{(a,b)}}(\mathcal{N}_m))\leq m^{\hat a+\hat b-1}b^{\hat a}a^{\hat b}\prod_ll^{a_l+b_l}.\]
\end{kor}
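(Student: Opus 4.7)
The plan is to apply Remark \ref{bem1}, which expresses
\[\chi(M^s_{\overline{(a,b)}}(\mathcal{N}_m))=m^{\hat a+\hat b-1}\sum_{(\mathcal{Q},d)\in\mathcal{L}_{(a,b)}(\mathcal{N}_m)}v((\mathcal{Q},d)),\]
and then to bound the remaining sum by (weighted) spanning-tree counts via Theorem \ref{spanning}. Every uncoloured localization data is a spanning tree of the underlying bipartite graph of $Q(\sum_lla_l,\sum_llb_l)$, so this is the natural setting in which to invoke the estimates of Theorem \ref{spanning}.

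For the first inequality the trivial partition forces $l(q)=1$ for every vertex $q$, hence $v((\mathcal Q,d))=1$ and $\hat a+\hat b=a+b$. The underlying graph of $Q(1\cdot a,1\cdot b)$ is precisely $K_{a,b}$, so the sum collapses to $\tau(K_{a,b})=a^{b-1}b^{a-1}$ by Theorem \ref{spanning}(1); plugging back into Remark \ref{bem1} gives the stated bound.

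For the second, more substantial inequality I would first rewrite the vertex-indexed product defining $v$ as an edge-indexed product: expanding
\[v((\mathcal{Q},d))=\prod_{q\in\mathcal Q_0}\prod_{q'\in N_q}l(q')\]
one sees that each edge $\{q,q'\}\in\mathcal{Q}_1$ contributes $l(q)l(q')$ exactly once, so $v((\mathcal Q,d))=\prod_{\{q,q'\}\in\mathcal{Q}_1}l(q)l(q')$. Consequently $\sum_{(\mathcal Q,d)}v((\mathcal Q,d))$ equals the number of spanning trees of the multigraph $\tilde G$ obtained from the underlying bipartite graph of $Q(\sum_lla_l,\sum_llb_l)$ by giving the edge between $i\in I_l$ and $j\in J_n$ multiplicity $ln$. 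The degrees in $\tilde G$ are easy to read off: $\deg(i)=l\sum_{j\in J}l(j)=lb$ for $i\in I_l$, and symmetrically $\deg(j)=na$ for $j\in J_n$. Applying Theorem \ref{spanning}(2) in the weaker form in which no factor is dropped (permissible because every degree is $\geq 1$) produces
\[\sum_{(\mathcal Q,d)}v((\mathcal Q,d))\leq\prod_{i\in I}l(i)b\cdot\prod_{j\in J}l(j)a=b^{\hat a}a^{\hat b}\prod_ll^{a_l+b_l},\]
and combining this with the formula of Remark \ref{bem1} finishes the proof.

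The only non-routine step is the edge-product reformulation of $v((\mathcal Q,d))$ and the resulting identification of the weighted sum with $\tau(\tilde G)$; once this is in place, both inequalities are immediate consequences of Theorem \ref{spanning}.
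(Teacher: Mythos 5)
Your argument is correct and is essentially the paper's: both proofs identify localization data compatible with $\overline{(a,b)}$ with (stable) spanning trees of the level-weighted bipartite multigraph underlying $Q(\sum_lla_l,\sum_llb_l)$ and then apply Theorem \ref{spanning}, the only cosmetic difference being that you extract the factor $m^{\hat a+\hat b-1}$ from Remark \ref{bem1} and keep all vertex degrees, while the paper puts the $m$'s into the edge multiplicities and drops one degree. One small imprecision: your two statements that the weighted sum \emph{equals} $\tau(K_{a,b})$, resp.\ $\tau(\tilde G)$, should be ``$\le$'', since $\mathcal{L}_{(a,b)}(\mathcal{N}_m)$ contains only those spanning trees admitting stable representations; this does not affect the inequalities.
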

\begin{proof} The first statement follows by the first part of the previous theorem together with Remark \ref{bem1}. In general, every source of level $l$ has degree $l\cdot b\cdot m$ and every sink of level $l$ has degree $l\cdot a\cdot m$. Since there exist $a_l$ sources (resp. $b_l$ sinks) of level $l$ in $Q(\sum_lla_l,\sum_llb_l)$ and, therefore, $\hat a$ sources (resp. $\hat b$ sinks) in total, the claim follows by the second part of the preceding theorem in the same way as the first part. Note that the product on the right hand side in the second part of Theorem \ref{spanning} does not involve the degrees of all vertices.
\end{proof} 
In order to treat the multinomial coefficients appearing in the MPS degeneration formula, we make use of the following lemma:
\begin{lem}
Fix $a,\,\hat{a}\in\Nn$. Let 
\[\mathcal{A}(a,\hat a,s):=\{(a_1,\ldots,a_s)\in\Nn_+^s\mid \sum_l a_l=\hat a,\,\sum_lla_l=a\}\]
and $\mathcal{A}(a,\hat a)=\bigcup_s\mathcal{A}(a,\hat a,s)$.
Then we have
\[\sum_{(a_1,\ldots,a_s)\in\mathcal{A}(a,\hat a)}\frac{\hat a!}{a_1!\ldots a_s!}=\binom{a-1}{\hat a-1}.\]

\end{lem}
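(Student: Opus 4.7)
The plan is to recognise the left-hand side as the total number of ordered compositions of $a$ into $\hat a$ positive parts, and then invoke the standard stars-and-bars count.

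First I would reinterpret an element $(a_1,\ldots,a_s)\in\mathcal A(a,\hat a)$ as a partition of $a$ with exactly $\hat a$ parts, where $a_l$ records the multiplicity of $l$ among the parts. The two conditions $\sum_l a_l=\hat a$ and $\sum_l l\,a_l=a$ say exactly this. Given such a partition, the multinomial coefficient
\[
\frac{\hat a!}{a_1!\cdots a_s!}
\]
counts the number of distinct orderings of the underlying multiset of parts, i.e., the number of tuples $(c_1,\ldots,c_{\hat a})\in\Nn_+^{\hat a}$ with $\sum_i c_i=a$ such that $|\{i\mid c_i=l\}|=a_l$ for every $l$.

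Next I would observe that every composition $(c_1,\ldots,c_{\hat a})\in\Nn_+^{\hat a}$ with $\sum c_i=a$ arises from a unique partition in $\mathcal A(a,\hat a)$, namely the one recording the multiplicities of the values taken by the $c_i$. Grouping compositions by this partition is therefore a partition of the set of all compositions, and summing over $\mathcal A(a,\hat a)$ yields the total count
\[
\sum_{(a_1,\ldots,a_s)\in\mathcal A(a,\hat a)}\frac{\hat a!}{a_1!\cdots a_s!}\;=\;\#\bigl\{(c_1,\ldots,c_{\hat a})\in\Nn_+^{\hat a}\mid c_1+\cdots+c_{\hat a}=a\bigr\}.
\]
Finally, the standard stars-and-bars bijection between such compositions and choices of $\hat a-1$ of the $a-1$ gaps between $a$ units gives $\binom{a-1}{\hat a-1}$, which is the desired equality.

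There is no real obstacle here beyond being careful with the indexing convention for $\mathcal A(a,\hat a)$: one has to read the union $\bigcup_s \mathcal A(a,\hat a,s)$ as sequences of multiplicities of a partition (so that adding or removing trailing zeros is implicit), after which the argument is purely bookkeeping. Alternatively, the identity could be verified by comparing the generating functions $\sum_{a,\hat a}\binom{a-1}{\hat a-1}x^ay^{\hat a}$ and $\sum_{\hat a\ge1}\bigl(\sum_{c\ge1}x^c\bigr)^{\hat a}y^{\hat a}$, both equal to $\frac{xy}{1-x-xy}$; but the bijective argument is cleaner and is the route I would take.
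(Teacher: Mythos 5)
Your argument is correct and is essentially the paper's own proof in different clothing: the paper also interprets each multinomial summand as the number of compositions of $a$ into $\hat a$ positive parts (phrased as level functions $l:\{q_1,\ldots,q_{\hat a}\}\rightarrow\Nn_+$ with $\sum_k l(q_k)=a$) having the prescribed multiplicity profile, and then counts all such compositions by $\binom{a-1}{\hat a-1}$ via a bijection with certain graphs that is exactly your stars-and-bars correspondence (parts recovered as consecutive gaps $k_{n+1}-k_n$). Your remark about reading $\mathcal{A}(a,\hat a)$ up to trailing/intermediate zero multiplicities is the same implicit convention the paper uses, so nothing is missing.
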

\begin{proof} Consider the set $S=\{q_1,\ldots,q_{\hat{a}}\}$ and let $T(\hat a,a):=\{l:S\rightarrow\Nn_+\mid\sum_{k=1}^{\hat a}l(q_k)=a\}$. 
Fixed a tuple $(a_1,\ldots,a_s)\in\mathcal{A}(a,\hat a,s)$ the corresponding summand is the number of level structures $l:S\rightarrow \Nn_+\in T(\hat a,a)$ such that $a_i$ elements have level $i$. Each such choice defines a graph with $a+1$ vertices
\[I=\{i_{k,l}\mid k=1,\ldots,\hat a,\,l=1,\ldots,l(i_k)\}\cup \{j\}\] and edges $(i_{k,1},j)$. Note that there always exists an edge $(i_{1,1},j)$. We denote all such graphs by $G(\hat a,a)$. The number of all such graphs is given by the right hand side of the formula. The other way around every such graph defines a level function $l:S\rightarrow\Nn_+\in T(\hat a,a)$ because there exist exactly  $a$ vertices and $\hat a$ edges. More detailed let $I=\{i_k\mid k=1,\ldots,a\}$ have edges $(i_{k_1},j),\ldots,(i_{k_{\hat a}},j)$ with $k_1<k_2<\ldots<k_{\hat a}$. Then we define $l(q_n)=k_{n+1}-k_{n}$ for $n\leq\hat a-1$ and $l(q_{\hat a})=a-k_{\hat a}+1$. Now it is straightforward to check that this gives a bijection between $T(\hat a,a)$ and $G(\hat a ,a)$. 
\end{proof}
We make use of the following well-known lemma:
\begin{lem}
For every weighted partition $a=\sum_lla_l$ with $a\geq 1$ we have
\[\frac{1}{\prod_ll^{a_l}}\binom{a}{\hat a}\leq 2^a.\]
\end{lem}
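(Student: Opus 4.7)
The plan is to reduce this inequality to two entirely elementary estimates and then combine them. Both estimates hold for any choice of nonnegative integers $a_l$ with $\sum_l l a_l = a$ and $\sum_l a_l = \hat{a}$, so the "weighted partition" structure plays no essential role beyond fixing the meaning of $\hat{a}$.

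First I would bound the denominator from below by $1$. For each index $l \geq 1$ appearing in the product, $l^{a_l} \geq 1^{a_l} = 1$, so
\[\prod_{l} l^{a_l} \geq 1.\]
This immediately yields
\[\frac{1}{\prod_l l^{a_l}} \binom{a}{\hat{a}} \leq \binom{a}{\hat{a}}\]
and reduces the lemma to the claim $\binom{a}{\hat{a}} \leq 2^a$.

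Second, since $a = \sum_l l a_l \geq \sum_l a_l = \hat{a}$, we have $0 \leq \hat{a} \leq a$, so $\binom{a}{\hat{a}}$ is a single term in the binomial expansion of $(1+1)^a$. All terms being nonnegative, the binomial theorem gives
\[\binom{a}{\hat{a}} \leq \sum_{k=0}^{a} \binom{a}{k} = 2^a.\]
Chaining the two inequalities completes the proof.

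There is no genuine obstacle: the result is really a packaging of two obvious observations, which is presumably why the paper labels it as well known. The only point deserving a brief comment is the hypothesis $a \geq 1$, which serves only to guarantee that $\hat{a} \geq 1$ and that the product $\prod_l l^{a_l}$ is nonempty; the inequality would hold trivially also for $a = 0$ (both sides equal $1$).
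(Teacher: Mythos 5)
Your proof is correct and follows essentially the same route as the paper, which simply observes $2^a=\sum_{i=0}^a\binom{a}{i}\geq\binom{a}{\hat a}$ for $0\leq\hat a\leq a$ (with $\prod_l l^{a_l}\geq 1$ left implicit). Your write-up only makes explicit the two trivial steps the paper compresses, so there is nothing to add.
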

\begin{proof}
This follows from
$2^a=\sum_{i=0}^a\binom{a}{i}\geq\binom{a}{\hat a}$
for $0\leq \hat a\leq a$.

\end{proof}

\begin{bem}
\end{bem}
\begin{itemize}
\item Recall that $\frac{1}{m}a\leq b\leq ma$ is no restriction because every Schur root of the Kronecker quiver satisfies this condition. 
\item Actually by the applied methods it seems that one cannot avoid some factor like $C^{a+b}$ with $1< C\leq 2$ in the upper bound. The reason for this is that
\[\frac{a!}{\hat a!}\binom{a}{\hat a}\nleq (Ka)^{\tilde a}\]
for some fixed constant $K>0$ and for all $a,\hat a\in\Nn$.

\end{itemize}

The last ingredient is an upper bound for the number of partitions of a given number $a\in\Nn$, see for instance \cite[Section 6]{kno}:
\begin{lem}
The number of partitions of $a$ is bounded by $\exp(\pi\sqrt{\frac{2a}{3}})$.
\end{lem}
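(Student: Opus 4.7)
The plan is to use Euler's generating function identity
\[\sum_{a\geq 0} p(a) x^a = \prod_{n\geq 1}(1-x^n)^{-1},\]
where $p(a)$ denotes the number of partitions of $a$. Since all coefficients are non-negative, for every $x\in(0,1)$ one obtains the trivial extraction estimate $p(a) \leq x^{-a}\prod_{n\geq 1}(1-x^n)^{-1}$. The task therefore reduces, after taking logarithms and writing $x=e^{-t}$ with $t>0$, to bounding the quantity $-\sum_{n\geq 1}\ln(1-e^{-nt})$ and minimising the resulting expression in $t$.

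Expanding $-\ln(1-y)=\sum_{k\geq 1}y^k/k$ and interchanging the two summations (which is legitimate by absolute convergence for every $t>0$) yields
\[-\sum_{n\geq 1}\ln(1-e^{-nt})=\sum_{k\geq 1}\frac{1}{k(e^{kt}-1)}.\]
The key ingredient is then the elementary inequality $e^u-1\geq u$ for $u>0$, which collapses the double sum to a Basel-type series:
\[-\sum_{n\geq 1}\ln(1-e^{-nt})\leq \sum_{k\geq 1}\frac{1}{k^2 t}=\frac{\pi^2}{6t}.\]
Combining this with the coefficient extraction gives $\ln p(a) \leq at + \pi^2/(6t)$ for every $t>0$.

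The final step is a one-variable minimisation of $at + \pi^2/(6t)$ over $t>0$. Differentiating produces the critical point $t=\pi/\sqrt{6a}$, at which both summands equal $\pi\sqrt{a/6}$, so the minimum value is $2\pi\sqrt{a/6}=\pi\sqrt{2a/3}$. This yields the desired bound $p(a)\leq\exp(\pi\sqrt{2a/3})$. The main (and essentially only) substantive step is the inequality $e^u-1\geq u$: it is what brings the factor $\pi^2/6$ into play and ultimately produces the characteristic Hardy--Ramanujan exponent $\pi\sqrt{2a/3}$; the rest is bookkeeping and single-variable calculus.
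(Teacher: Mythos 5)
Your proof is correct: the coefficient-extraction bound $p(a)\leq x^{-a}\prod_{n\geq 1}(1-x^n)^{-1}$, the rearrangement to $\sum_{k\geq 1}\frac{1}{k(e^{kt}-1)}$, the estimate $e^{kt}-1\geq kt$ giving $\frac{\pi^2}{6t}$, and the optimisation $t=\pi/\sqrt{6a}$ all check out and yield exactly $\ln p(a)\leq \pi\sqrt{2a/3}$. Note that the paper itself offers no proof of this lemma at all --- it simply cites Knopp \cite[Section 6]{kno} --- so what you have written is the standard self-contained elementary generating-function argument (essentially the one found in the cited literature), and it is a perfectly adequate substitute for the citation.
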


In summary we obtain an upper bound for the Euler characteristic:
\begin{satz}\label{upperbound}
We have
\[\chi(M^s_{a,b}(K(m))\leq \frac{1}{a!b!}2^{a+b}m^{a+b-1}\exp(\pi\sqrt{\frac{2}{3}}(\sqrt{a}+\sqrt{b}))b^{a+1/2}a^{b+1/2}.\]
\end{satz}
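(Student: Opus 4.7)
The plan is to chain Corollary~\ref{mpseuler} (applied at both vertices of $K(m)$) with the tree-counting upper bound $\chi(M^s_{\overline{(a,b)}}(\mathcal{N}_m))\leq m^{\hat a+\hat b-1}b^{\hat a}a^{\hat b}\prod_l l^{a_l+b_l}$ from the preceding corollary, after taking absolute values. One power of $l^{a_l+b_l}$ cancels against the denominator $\prod l^{2(a_l+b_l)}$ in the MPS weight, reducing everything to bounding the double partition sum
\[\sum_{\substack{\sum la_l=a\\ \sum lb_l=b}} m^{\hat a+\hat b-1}\,b^{\hat a}a^{\hat b}\prod_l\frac{1}{a_l!\,b_l!\,l^{a_l+b_l}}.\]

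Next, I group the partitions of $a$ (and, symmetrically, of $b$) by the number of parts $\hat a$. For fixed $\hat a$, the $2^a$-Lemma provides the partition-independent bound $\prod_l 1/l^{a_l}\leq 2^a/\binom{a}{\hat a}$, which may be pulled out of the inner sum; the multinomial identity $\sum \hat a!/\prod a_l!=\binom{a-1}{\hat a-1}$ then closes the remainder, and the routine simplification $\binom{a-1}{\hat a-1}/\binom{a}{\hat a}=\hat a/a$ yields
\[\sum_{\substack{\sum la_l=a\\ \sum a_l=\hat a}}\prod_l\frac{1}{a_l!\,l^{a_l}}\leq\frac{2^a}{a(\hat a-1)!}.\]
Using this on both sides and performing the index shift $k=\hat a-1$, $j=\hat b-1$ (which absorbs the remaining powers of $m$ cleanly), the double partition sum becomes a product of two truncated exponentials:
\[\chi(M^s_{a,b}(K(m)))\leq 2^{a+b}\,m\left(\sum_{k=0}^{a-1}\frac{(mb)^k}{k!}\right)\left(\sum_{j=0}^{b-1}\frac{(ma)^j}{j!}\right).\]
The Schur-root hypothesis $a/m\leq b\leq am$ (which, by the first Remark before the theorem, may be assumed) guarantees $mb\geq a$ and $ma\geq b$, so the consecutive-term ratios $mb/k$ and $ma/j$ in the two series are $\geq 1$ on the whole summation range. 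The terms are therefore non-decreasing, and each sum is bounded by (number of summands)$\times$(last term), which after simplification produces the intermediate bound
\[\chi(M^s_{a,b}(K(m)))\leq \frac{2^{a+b}\,m^{a+b-1}\,a^{b+1}b^{a+1}}{a!\,b!}.\]

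To reach the form stated in the theorem I factor $a^{b+1}b^{a+1}=a^{b+1/2}b^{a+1/2}\sqrt{ab}$ and invoke the partition-count Lemma: for $x\geq 1$ one has $\sqrt{x}\leq p(x)\leq \exp(\pi\sqrt{2x/3})$, so applying this with $x=a$ and $x=b$ gives $\sqrt{ab}\leq \exp(\pi\sqrt{2/3}(\sqrt a+\sqrt b))$, which converts the intermediate bound into the inequality claimed in the theorem. The main technical hurdle is the truncated-exponential step: a crude bound by $e^{mb}$ would ruin the desired $m^{a+b-1}$ scaling, and the Schur-root inequality is crucial for securing monotonicity of the truncated series; once that is in place, everything else is careful bookkeeping with the three preceding lemmas and elementary binomial identities.
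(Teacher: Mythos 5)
Your proposal is correct, and although it starts from the same two ingredients as the paper (Corollary \ref{mpseuler} applied at both vertices together with the spanning-tree bound $\chi(M^s_{\overline{(a,b)}}(\mathcal{N}_m))\leq m^{\hat a+\hat b-1}b^{\hat a}a^{\hat b}\prod_l l^{a_l+b_l}$), it treats the resulting double partition sum in a genuinely different way. The paper bounds every summand uniformly, using the multinomial identity only termwise (via $\hat a!/\prod_l a_l!\leq\binom{a-1}{\hat a-1}$), the $2^a$-lemma, and elementary estimates absorbing $a!/\hat a!$ into powers of $mb$, and then pays for the number of summands with the partition-count lemma; in the paper the factor $\exp(\pi\sqrt{2/3}(\sqrt a+\sqrt b))$ genuinely originates from counting the pairs of partitions. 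You instead keep the sum, factor it over the partitions of $a$ and of $b$, group by the number of parts, and use the multinomial identity as an exact identity to obtain the inner bound $2^a/(a(\hat a-1)!)$; this turns each factor into a truncated exponential series, which you bound by (number of terms)$\times$(last term) using the Schur-root inequalities $a\leq mb$ and $b\leq ma$, which are legitimately in force by the standing assumption of Remark \ref{isom} (and the bound is trivial when the moduli space is empty). Your route yields the sharper intermediate estimate $2^{a+b}m^{a+b-1}a^{b+1}b^{a+1}/(a!\,b!)$, i.e.\ $\sqrt{ab}$ in place of the exponential factor, and needs the partition-count lemma only for the cosmetic relaxation $\sqrt{ab}\leq\exp(\pi\sqrt{2/3}(\sqrt a+\sqrt b))$, which is immediate anyway (the auxiliary claim $\sqrt{x}\leq p(x)$ is true but superfluous). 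In short, both arguments prove the stated inequality; the paper's is quicker via crude termwise estimates plus Hardy--Ramanujan counting, while yours evaluates the combinatorial sum more carefully, actually strengthens the theorem, and feeds into Corollary \ref{ln2} equally well since the improvement is subexponential.
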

\begin{proof}
Applying the results of this section we obtain
\begin{eqnarray*}\chi(M_{a,b}^s(K(m)))&=&\sum_{\substack{\sum la_l=a\\\sum lb_l=b}}\prod_l\frac{(-1)^{(a_l+b_l)(l-1)}}{a_l!b_l!l^{2(a_l+b_l)}}\chi(M_{\overline{(a,b)}}^s(\mathcal{N}_m))\\
&\leq& \sum_{\substack{\sum la_l=a\\\sum lb_l=b}}\frac{1}{\hat a!\hat b!}\binom{a}{\tilde a}\binom{b}{\tilde b}\frac{\chi(M_{\overline{(a,b)}}^s(\mathcal{N}_m))}{\prod_ll^{2(a_l+b_l)}}\\
&\leq& \sum_{\substack{\sum la_l=a\\\sum lb_l=b}}2^{a+b}\frac{1}{a!b!}b^{\tilde a+1/2}a^{\tilde b+1/2}m^{\tilde a+\tilde b}\frac{\chi(M_{\overline{(a,b)}}^s(\mathcal{N}_m))}{\prod_ll^{a_l+b_l}}\\
&\leq& \sum_{\substack{\sum la_l=a\\\sum lb_l=b}}2^{a+b}\frac{1}{a!b!}b^{\tilde a+1/2}a^{\tilde b+1/2}m^{\tilde a+\tilde b}b^{\hat a}a^{\hat b}m^{\hat a+\hat b-1}\\
&\leq&\sum_{\substack{\sum la_l=a\\\sum lb_l=b}}2^{a+b}\frac{1}{a!b!}b^{a+1/2}a^{b+1/2}m^{a+ b-1}\\
&\leq&\frac{1}{a!b!}2^{a+b}m^{a+b-1}\exp(\pi\sqrt{\frac{2}{3}}(\sqrt{a}+\sqrt{b}))b^{a+1/2}a^{b+1/2}.
\end{eqnarray*}
\end{proof}
\begin{bem}\label{bem2}
\end{bem}
\begin{itemize}
\item Note that if we only consider the summand corresponding to the trivial partition the term $\ln 2$ vanishes. 
\item Using the isomorphisms of moduli spaces we can improve the upper bound  obtained in Theorem \ref{upperbound}. Indeed, it is straightforward to check that the bound is sharpest if $\frac{2}{m}a\leq b\leq\frac{m}{2}a$ which can be assumed by the isomorphisms.
\end{itemize}
Let $b\approx ka$ such that $(a,b)$ coprime. Let 
$m_1=\frac{\textstyle m-\sqrt{m^2-4}}{\textstyle
2}$ and $m_2:=\frac{\textstyle
m+\sqrt{m^2-4}}{\textstyle 2}$. Define $f_m:[m_1,m_2]\rightarrow\mathbb{R}$ by 
\[f_m(k):=\lim_{a\rightarrow\infty}\frac{1}{a}\ln\chi(M_{a,b}^s(K(m))).\]
As already mentioned in the introduction it is conjectured that
\[f_m(k)=\frac{K}{\sqrt{m-2}}\sqrt{r(m-r)-1}\]
with $K=(m-1)^2\ln((m-1)^2)-(m^2-2m)\ln(m^2-2m)$, see \cite[Section 6]{wei} for more details. We can compare $f_m$ to the upper bound using the following corollary:  
\begin{kor}\label{ln2}
Let $b\approx ka$ and let $(a,b)$ coprime. Then we have 
\[\lim_{a\rightarrow\infty}\frac{1}{a}\ln\chi(M_{a,b}^s(K(m)))\leq(k+1)(\ln(m)+\ln 2+1)-(k-1)\ln k.\]
\end{kor}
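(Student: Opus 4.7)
The plan is to take the natural logarithm of the upper bound in Theorem \ref{upperbound}, divide by $a$, and pass to the limit using $b/a\to k$. Writing
\[\ln\chi(M^s_{a,b}(K(m)))\leq -\ln(a!)-\ln(b!)+(a+b)\ln 2+(a+b-1)\ln m+\pi\sqrt{\tfrac{2}{3}}(\sqrt{a}+\sqrt{b})+(a+\tfrac{1}{2})\ln b+(b+\tfrac{1}{2})\ln a,\]
I would apply Stirling's formula $\ln n!=n\ln n-n+O(\ln n)$ to the two factorials. The terms $\pi\sqrt{2/3}(\sqrt{a}+\sqrt{b})$ and the $O(\ln)$ remainders from Stirling are all $o(a)$ and so can be discarded after dividing by $a$.

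The first key observation is a cancellation of the $a\ln a$ terms. Grouping by logarithm, one obtains
\[(b-a+\tfrac{1}{2})\ln a+(a-b+\tfrac{1}{2})\ln b=(b-a)\ln\tfrac{a}{b}+O(\ln a).\]
Substituting $b=ka+o(a)$ gives $(b-a)\ln(a/b)=-(k-1)a\ln k+o(a)$, which contributes $-(k-1)\ln k$ in the limit. This is the one nontrivial cancellation; the remaining terms are linear in $a$ with obvious coefficients.

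I would then collect the remaining contributions: $(a+b)\ln 2=(1+k)a\ln 2+o(a)$, $(a+b-1)\ln m=(1+k)a\ln m+o(a)$, and from Stirling the linear parts $a+b=(1+k)a+o(a)$. Dividing by $a$ and letting $a\to\infty$ yields
\[\lim_{a\to\infty}\frac{1}{a}\ln\chi(M^s_{a,b}(K(m)))\leq -(k-1)\ln k+(k+1)\ln 2+(k+1)\ln m+(k+1),\]
which is the claimed bound $(k+1)(\ln m+\ln 2+1)-(k-1)\ln k$.

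There is no real obstacle here: the corollary is a direct asymptotic evaluation of Theorem \ref{upperbound}. The only point requiring a little care is the cancellation of the $a\ln a$ contributions between the Stirling expansions of $a!,b!$ and the factors $b^{a+1/2}a^{b+1/2}$; handling the approximation $b\approx ka$ (interpreted as $b/a\to k$) rigorously simply amounts to writing $b=ka+o(a)$ and tracking that all resulting error terms are $o(a)$, so that they vanish after division by $a$.
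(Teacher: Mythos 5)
Your proposal is correct and follows essentially the same route as the paper: substitute $b\approx ka$ into the bound of Theorem \ref{upperbound}, apply Stirling's formula to $a!$ and $b!$, and observe that the dominant $a\ln a$ contributions cancel against $b^{a+1/2}a^{b+1/2}$, leaving exactly $(k+1)(\ln m+\ln 2+1)-(k-1)\ln k$ after dividing by $a$ and passing to the limit. Your version is slightly more careful in tracking the $o(a)$ errors from $b=ka+o(a)$, but the argument is the same.
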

\begin{proof}
Since we are interested in the logarithmic behaviour, we make use of the Stirling formula when setting $x!\approx \frac{x^x}{\exp(x)}$. For $a\gg 0$ we obtain
\[
\frac{(ka)^aa^{ka}}{a!(ka)!}\approx\frac{(ka)^aa^{ka}\exp((k+1)a)}{a^a(ka)^{ka}}=\frac{\exp((k+1)a)}{k^{(k-1)a}}.
\]
Thus we get
\begin{eqnarray*}
\lim_{a\rightarrow\infty}\frac{1}{a}\ln\chi(M_{a,b}^s(K(m)))&\leq&\lim_{a\rightarrow\infty}\frac{1}{a}\ln\frac{e^{\pi\sqrt{2/3}(\sqrt{a}+\sqrt{ka})}2^{(k+1)a}m^{(k+1)a-1}(ka)^{a+1/2}a^{ka+1/2}}{a!(ka)!}
\\&=&(k+1)(\ln(m)+\ln 2+1)-(k-1)\ln k.
\end{eqnarray*}
\end{proof}
In \cite{wei} it is shown that the Euler characteristic grows at least exponentially. Together with this result we obtain that the Euler characteristic of Kronecker moduli spaces grows exponentially.\\

Define $g_m:[m_1,m_2]\rightarrow\mathbb{R}$ by $g_m(k):=(k+1)(\ln(m)+\ln(2)+1)-(k-1)\ln k$. It is straightforward to check that we have $h_m(k):=\frac{g_m(k)}{f_m(k)}>1$ for $k\in [m_1,m_2]$. More detailed, one finds out that $h_m$ has a minimum at $k=1$. 

\section{On the recursive construction of localization data}
\noindent In order to determine the Euler characteristic of Kronecker moduli spaces using the MPS degeneration formula together with the localization theorem one has to determine all localization data $(\mathcal{Q},d)$ where $d$ is compatible with a tuple of partitions of a fixed dimension vector $(a,b)$. The method described in this section shows that every localization data compatible with a partition of $(a,b)$ corresponds to one compatible with the trivial partition of $(a,b)$. To do so, starting with a fixed partition, we state a method how to construct localization data of a refined partition recursively.  \\

Let $(\mathcal Q,d)$ be an uncoloured localization data which is compatible with the weighted partition $(a=\sum_lla_l,b=\sum_llb_l)$ understood as a dimension vector of $\mathcal{N}_m$. If this weighted partition is non-trivial with $a_{k}\neq 0$, we modify it at some sink by defining
\[a':=\sum_{l}a'_ll\]
by $a'_1=a_1+1$, $a'_{k-1}=a_{k-1}+1$, $a'_{k}=a_{k}-1$ and $a'_l=a_l$ for $l\notin\{1,k-1,k\}$. 

Consider a source $i_{k}\in\mathcal{Q}$ of level $k$. Then we split up this vertex into two vertices $i_{k-1}$ and $i_{1}$ of level $k-1$ and $1$ respectively. Moreover, let $d_{i_{k-1}}=1=d_{i_{1}}$ and let $\hat{J}=\{j_1,\ldots,j_s\}= N_{i_{k}}$ of level $l_1,\ldots,l_s$. The stability condition implies  
\[\frac{a}{a+b}>\frac{k}{k+\sum_{i=1}^sl_i}.\]
Using this notation we have the following lemma:
\begin{lem}
There exists a decomposition $\hat{J}=J_1\cup J_2$ with $J_1\cap J_2=\{j_t\}$ such that, setting $N_{i_{1}}=J_1$, $N_{i_{k-1}}=J_2$ and leaving the remaining quiver the way it is, the resulting data is a localization data compatible with the weighted partition $(a',b)$.
\end{lem}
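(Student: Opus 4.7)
The plan is to classify subrepresentations of the proposed new data, derive sufficient inequalities on $J_1$ and $J_2$ from the stability of $(\mathcal{Q},d)$, and construct such a decomposition using the key bound $a\sum_i l_i>bk$. Since every vertex of $(\mathcal{Q}',d')$ has dimension one, subrepresentations correspond to source-closed subsets $U'\subseteq\mathcal{Q}'_0$; I classify these by the intersection $\{i_1,i_{k-1}\}\cap U'$. If $U'$ contains both or neither vertex, then (using $1+(k-1)=k$) the weighted source and sink sums of $U'$ match those of a subrepresentation of $(\mathcal{Q},d)$, so the slope inequality is inherited from the hypothesis. Only the two cases in which $U'$ contains exactly one of $i_1,i_{k-1}$ impose new conditions; call these cases (b) and (c) respectively.

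A case (b) subrepresentation has the form $\{i_1\}\cup I'\cup(J_1\cup N(I'))$ for $I'\subseteq I_{\mathrm{others}}$. Using the disjoint decomposition
\[
\hat{J}\cup N(I')=\bigl(J_1\cup N(I')\bigr)\sqcup\bigl((J_2\setminus\{j_t\})\setminus N(I')\bigr)
\]
and the stability of $I'\cup\{i_k\}\cup(\hat{J}\cup N(I'))$ in $(\mathcal{Q},d)$, the desired slope inequality $a\cdot w(J_1\cup N(I'))>b(w(I')+1)$ follows provided $a\cdot w(J_2\setminus\{j_t\})\le b(k-1)$; symmetrically, case (c) reduces to $a\cdot w(J_1\setminus\{j_t\})\le b$. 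Summing these two upper bounds forces $a\,l(j_t)\ge a\sum_i l_i-bk$, which is positive by the hypothesized inequality from the stability of $\{i_k\}\cup\hat{J}$ in $(\mathcal{Q},d)$, so $j_t$ must be an element of $\hat{J}$ with sufficiently large level.

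The construction then proceeds by a greedy partial-sum on $\hat{J}\setminus\{j_t\}$: order the elements and accumulate them into $J_1\setminus\{j_t\}$ until its weighted sum is maximal subject to the threshold $b/a$, then place the remaining elements into $J_2\setminus\{j_t\}$; the input inequality $a\sum_i l_i>bk$ together with the greedy choice then produces both upper bounds simultaneously, while the strict lower bounds $a\cdot w(J_1)>b$ and $a\cdot w(J_2)>b(k-1)$ (arising from the cases $I'=\emptyset$) come out automatically from the overshoot at the threshold. The main obstacle is handling cases in which no single element of $\hat{J}$ has level exceeding $\sum_i l_i-bk/a$; one then refines the argument using the tree structure of $\mathcal{Q}$ by introducing the \emph{excess} $v(j):=a\,l(j)-b\sum_{i\in L_j}l(i)$, where $L_j$ denotes the set of pendant sources in $I_{\mathrm{others}}$ attached to $j$, verifying $\sum_{j\in\hat{J}}v(j)>bk$ from the stability of $\{i_k\}\cup\hat{J}\cup L_{\hat{J}}$ in $(\mathcal{Q},d)$, and running the greedy on the $v(j)$'s to obtain a pendant-weighted refinement of the two sufficient conditions that is achievable precisely because a source in $I_{\mathrm{others}}$ can, by the tree structure, have at most one neighbour in $\hat{J}$.
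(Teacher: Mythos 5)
Your classification of the subrepresentations of the modified data is fine, and the two inequalities you extract, $a\sum_{j\in J_2\setminus\{j_t\}}l(j)\le b(k-1)$ and $a\sum_{j\in J_1\setminus\{j_t\}}l(j)\le b$, are indeed sufficient for stability. The gap is in the existence step, which is the actual content of the lemma: these conditions (and your pendant--excess refinement) compare the modified pieces against the global slope $\tfrac{a}{a+b}$ via a worst--case estimate, and such absolute conditions need not be achievable by any choice of $j_t,J_1,J_2$, even though the lemma is true. Concretely, take $k=2$ and let $i_k$ have neighbours $j_1,j_2,j_3$ of level $2$, where each $j_r$ is additionally attached to a level-$1$ source $t_r$ having one further private level-$1$ sink. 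This is a stable localization data with $a=5$, $b=9$; no $j_r$ carries a pendant source, so $v(j_r)=a\,l(j_r)=10>b=9$ for every $r$, and hence for any choice of common sink the remaining excesses cannot be distributed into parts of size at most $b(k-1)=9$ and $b=9$ (the unrefined conditions fail for the same reason). Nevertheless the conclusion of the lemma holds there: $J_1=\{j_1,j_2\}$, $J_2=\{j_2,j_3\}$ yields a stable data, because the sinks you ``lose'' are compensated by the non-pendant sources $t_r$, whose weight your excess $v(j)$ does not record. Note also that the inequality you verify, $\sum_{j\in\hat{J}}v(j)>bk$, points in the wrong direction: achievability of your split needs an upper bound on $\sum_{j\ne j_t}v(j)$, and no such bound follows from stability. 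Even when a suitable $j_t$ exists, the greedy can fail to keep both parts under their thresholds (three weights just below $b/a$ with $k=2$ already defeat it).

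The paper avoids this trap by making the conditions relative instead of absolute: it only demands that the slopes of the two new local pieces $\{i_1\}\cup J_1$ and $\{i_{k-1}\}\cup J_2$ do not exceed the slope $\tfrac{k}{k+\sum_i l_i}$ of the original piece $\{i_k\}\cup\hat{J}$, which amounts to the inequalities $(k-1)\sum_{i=1}^{s}l_i\le k\sum_{j\in J_2}l_j$ and $\sum_{i=1}^{s}l_i\le k\sum_{j\in J_1}l_j$; these involve only $k$ and the levels inside $\hat{J}$, not $(a,b)$, and the paper proves by induction on $|\hat{J}|$ that a decomposition with one common sink satisfying them always exists (at each step the new sink either joins one part or becomes the new common sink). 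Because this comparison adapts to however tight the original stability inequalities happen to be, it can be combined with the stability of $(\mathcal{Q},d)$ on every subrepresentation containing exactly one of the new vertices, which is how the paper concludes. To repair your argument you would have to replace your two absolute bounds by this kind of comparison with the original local slope and then supply the combinatorial existence statement --- which is precisely the paper's induction.
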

\begin{proof}
It suffices to show that for every sequence of positive integers $(l_1,\ldots,l_s)$ there exists a decomposition such that
\[\frac{k}{k+\sum_{i=1}^sl_i}\geq \frac{k-1}{k-1+\sum_{j\in J_1}l_j}\text{ and } \frac{k}{k+\sum_{i=1}^sl_i}\geq \frac{1}{1+\sum_{j\in J_2}l_j}.\]
These inequalities are easily seen to be equivalent to
\[(k-1)\sum_{i=1}^sl_i\leq k\sum_{i=t}^{s}l_i\text{ and } \sum_{i=1}^sl_i\leq k\sum_{i=1}^tl_i.\]
For the proof we keep in mind that the slope of the localization data does not change and that we just modify one sink. This means that parts of the modified data which do not include the modified sinks do not contradict the stability condition. Moreover, if these inequalities are satisfied, parts of the data including the modified sinks are easily seen to be of smaller slope than the corresponding parts of the original localization data.

We proceed by induction on $|J|$. For $|J|=1$ the statement is trivial. Assume that for $J_1=\{j_1,\ldots,j_t\}$ and $J_2=\{j_t,\ldots,j_{s}\}$ the inequalities from above hold and let $j_{s+1}$ be an additional sink. First assume that $(k-1)\sum_{i=1}^tl_i>\sum_{i=t+1}^{s+1}l_i$. Then 
\[\sum_{i=1}^{s+1}l_i\leq k\sum_{i=1}^tl_i\]
follows by this inequality and the other inequality follows by the induction hypothesis because $(k-1)l_{s+1}<kl_{s+1}$. Thus assume that $(k-1)\sum_{i=1}^tl_i\leq \sum_{i=t+1}^{s+1}l_i$. Then we have
\[(k-1)\sum_{i=1}^{s+1}l_i\leq k\sum_{i=t+1}^{s+1}l_i.\]
Moreover, since $\sum_{i=1}^sl_i\leq k\sum_{i=1}^tl_i$ it follows that
\[\sum_{i=1}^{s+1}l_i\leq k\sum_{i=1}^tl_i+kl_{s+1}.\]
Note that, in the second case $j_{s+1}$ is the common sink.
\end{proof}

Applying this method recursively this shows that with every localization data which is compatible with an arbitrary partition we can associate a localization data which is compatible with the trivial partition. Unfortunately this construction is not unique. On the one hand there can be more than one modified localization data which is no problem as long as we are only interested in an upper bound. But, on the other hand, there can be two localization data such that their modified data coincide. For instance consider the localization data given by the quivers
\[
\begin{xy}
\xymatrix@R16pt@C15pt{j_{1,1}&&j_{1,2}&&j_{1,3}&j_{1,4}&j_{1,5}\\
&i_{1,1}\ar[lu]\ar[ru]&&i_{1,2}\ar[lu]\ar[ru]&&i_{2,1}\ar[lu]\ar[u]\ar[ru]
}\end{xy}
\]
and
\[
\begin{xy}
\xymatrix@R16pt@C15pt{j_{1,1}&&j_{1,2}&j_{1,3}&j_{1,4}&&j_{1,5}\\
&i_{1,1}\ar[lu]\ar[ru]&&i_{2,1}\ar[u]\ar[lu]\ar[ru]&&i_{1,2}\ar[lu]\ar[ru]
}\end{xy}
\]
with level structure given by $l(q_{l,k})=l$ for $q\in\{i,j\}$. Applying the methods we described, in both cases one of the modified localization data is
\[
\begin{xy}
\xymatrix@R16pt@C15pt{j_{1,1}&&j_{1,2}&&j_{1,3}&&j_{1,4}&&j_{1,5}\\
&i_{1,1}\ar[lu]\ar[ru]&&i_{1,2}\ar[lu]\ar[ru]&&i_{1,3}\ar[lu]\ar[ru]&&i_{1,4}\ar[lu]\ar[ru]
}\end{xy}
\]
Nevertheless, it gives a connection between localization data compatible with an arbitrary partition and the one being compatible with the trivial partition.\\

\begin{bem}
\end{bem}
\begin{itemize}
\item One might also apply the MPS degeneration formula only to the source (resp. sink) of $K(m)$. The resulting infinite quiver is the quiver $\mathcal{M}_m$ introduced in Section \ref{upper}. In this case there exist $l\cdot m$ arrows from a source $i_{l,k}$ to $j$. Thus, fixing a localization data, every source of level $l$ has less than $l\cdot m$ neighbours. This restricts the number of neighbours of some localization data. A construction similar to the one introduced in this section applies.

An advantage of applying the MPS-formula only to the source is that the number of neighbours of sources in some localization data is bounded by the number of outgoing arrows of the source it corresponds to. More detailed, a source of level $l$ has at most $l\cdot m$ neighbours. The disadvantage is that the moduli spaces of localization data are no points in general.\\
\item Summarizing, the next step could be to think about the following questions:
\begin{enumerate}
\item Can one modify this construction in order to make it unique?
\item What are the fibres of the corresponding map?\\
\end{enumerate}
Answering this questions, should help to prove Douglas' conjecture because combinatorics clearly simplify when only considering the trivial partition.
\end{itemize}
\section{A formula for the Euler characteristic of certain moduli spaces}
\noindent In this section we consider the quiver $\mathcal{N}_m$ and the trivial partition $(1\cdot a,1\cdot (ka+1))$ for some $k\in\Nn_+$. We obtain a formula for the Euler characteristic of the moduli space $M^s_{(1\cdot a,1\cdot (ka+1))}(\mathcal{N}_m)$. This formula holds for arbitrary positive integers $m$ and $k$.
\begin{lem}\label{lemmaaka}
Let $(\mathcal Q,d)$ be a localization data which is compatible with $(1\cdot a,1\cdot (ka+1))$. Then $\mathcal Q$ is a tree with $a$ sources and $ka+1$ sinks such that every source has exactly $k+1$ neighbours. 
\end{lem}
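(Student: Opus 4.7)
The plan is to combine three elementary observations: $\mathcal Q$ must be a tree; every source of $\mathcal Q$ has at least $k+1$ neighbours (by stability); and a bipartite edge--degree count then forces equality.

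For the first point, compatibility of $d$ with the partition tuple $(1\cdot a,1\cdot(ka+1))$ together with the assumption $d_q\neq 0$ forces every $d_q=1$, so $\mathcal Q$ has exactly $a$ sources and $ka+1$ sinks. As $\mathcal Q$ sits inside the universal cover $\tilde{\mathcal N}_m$, which is a tree, it is a forest. If it were disconnected, any representation with support $\mathcal Q$ would split as a direct sum of strictly smaller ones, contradicting stability (note that $(a,ka+1)$ is $\Theta$-coprime since $\gcd(a,ka+1)=1$, so semistability equals stability). Hence $\mathcal Q$ is a tree with $(k+1)a+1$ vertices and $(k+1)a$ edges; being bipartite, this gives $\sum_{i\in I}\deg(i)=(k+1)a$.

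For the second point, I would fix a source $i^{\ast}\in\mathcal Q_0$ with $d$ neighbours. The vertex $i^{\ast}$ together with all its sink-neighbours spans a subrepresentation $U$: every arrow out of $i^{\ast}$ lands in $U$ by construction, and no other source is selected, so all other outgoing arrows are zero on $U$. Specialising the stability inequality from Section~\ref{allg} with $a'=1$, $b'=d$ and all levels equal to one gives
\[
d \;>\; \frac{ka+1}{a} \;=\; k+\tfrac{1}{a},
\]
so $d\geq k+1$.

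Putting the two estimates together, $(k+1)a=\sum_{i\in I}\deg(i)\geq\sum_{i\in I}(k+1)=(k+1)a$, whence equality holds throughout and every source has exactly $k+1$ neighbours. I do not foresee any serious obstacle; the only delicate point is ruling out a disconnected $\mathcal Q$, which rests on $\Theta$-coprimality of the dimension vector.
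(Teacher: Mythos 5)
Your argument is correct, but it follows a genuinely different route than the paper. You and the paper share the first ingredient: the subrepresentation generated by a single source $i^{\ast}$ and its sink-neighbours, together with the slope inequality $b'>\frac{ka+1}{a}a'$, gives $|N_{i^{\ast}}|\geq k+1$. From there the paper proceeds by induction on $a$: it locates a pendant source $i$ whose neighbouring sinks $j_1,\ldots,j_n$ are all leaves except one, shows $n\geq k+2$ would violate stability for the complementary part, concludes $n=k+1$, deletes this subquiver and checks the remainder is a localization data compatible with $(a-1,k(a-1)+1)$ (a slightly delicate verification, but one that exhibits the recursive "glue a claw $(i,j_1,\ldots,j_k)$" structure exploited again in the generating-function argument of the following proposition). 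You instead close the argument by pure counting: since $\mathcal Q$ is a connected subquiver of the tree $\tilde{\mathcal N}_m$ with $(k+1)a+1$ vertices all of dimension $1$, it has $(k+1)a$ edges, each incident to exactly one source, so $\sum_{i\in I}\deg(i)=(k+1)a$; combined with $\deg(i)\geq k+1$ for each of the $a$ sources this forces equality everywhere. This is shorter and avoids the deletion/induction step entirely, at the cost of not producing the recursive decomposition the paper reuses later; for the lemma as stated it is fully sufficient. One small point to make explicit: your stability inequality applies to \emph{proper} subrepresentations, so the case where $U$ is all of $X$ (which only happens for $a=1$) should be noted separately --- there the edge count alone already gives $\deg(i^{\ast})=k+1$, so nothing is lost.
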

\begin{proof}
By the stability condition for every source $i\in\mathcal{Q}_0$ we have $|N_i|\geq k+1$. Now we can be proceed by induction on the number of sources $a$ using that every localization data has a subquiver $(i,j_1,\ldots,j_n)$ such that $N_{j_s}=\{i\}$ for all but one $s$. If we had $n\geq k+2$ the remaining part of the data would contradict the stability condition because
\[k(a+1)+1-(k+1)=ka<\frac{k(a+1)+1}{a+1}a=ka+\frac{a}{a+1}.\]
Thus we have $n=k+1$. Deleting this subquiver except the sink with $|N_{j_k}|>1$ it is straightforward to check that we obtain a localization data compatible with $(a,ka+1)$ because
\[\frac{k(a+1)+1}{a+1}a'<b'\Leftrightarrow\frac{ka+1}{a}a'<b'\]
for all $a'<a$ and $b'\in\Nn$. Thus the claim follows by the induction hypotheses.
\end{proof}

Let $(\mathcal{Q},d)$ a localization data compatible with $(1\cdot a,1\cdot (ka+1))$. Since the dimension vector is already given by $\mathcal{Q}$ in abuse of notation we will skip $d$ in what follows. Forgetting about the colouring of the vertices and arrows we can assign to $\mathcal{Q}$ the weight $w(\mathcal{Q})=\frac{1}{|\mathrm{Aut}(\mathcal{Q})|}$ of quiver automorphisms. Define
\[\mathcal T(a,ka+1):=\sum_{\mathcal{Q}}w(\mathcal Q)\]
where the sum is taken over all uncoloured localization data compatible with $(1\cdot a,1\cdot(ka+1))$.
\begin{pro}
We have
\[\mathcal T(a,ka+1)=\frac{1}{(ka+1)^2}\frac{1}{a!}\left(\frac{(ka+1)}{k!}\right)^{a}.\]
\end{pro}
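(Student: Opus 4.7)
The plan is to convert $\mathcal{T}(a,ka+1)$ into a count of labeled bipartite trees and then apply a Prüfer-type formula for trees with prescribed degree sequences. By Lemma \ref{lemmaaka}, the uncoloured localization data contributing to $\mathcal{T}(a,ka+1)$ are exactly the isomorphism classes of bipartite trees with $a$ sources, each of degree precisely $k+1$, and $ka+1$ sinks whose positive degrees sum to $(k+1)a$. The standard orbit-stabiliser relation
\[\mathcal{T}(a,ka+1) \;=\; \sum_{[\mathcal{Q}]}\frac{1}{|\mathrm{Aut}(\mathcal{Q})|} \;=\; \frac{N}{a!\,(ka+1)!}\]
then reduces the problem to counting the set of $N$ labeled bipartite trees on source set $\{1,\ldots,a\}$ and sink set $\{1,\ldots,ka+1\}$ satisfying the source-degree constraint.

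To compute $N$ I would invoke the refined bipartite-tree formula: for $|A|=a$, $|B|=b$ and degree sequences $(s_i)_{i=1}^{a}$, $(t_j)_{j=1}^{b}$ satisfying $\sum s_i = \sum t_j = a+b-1$, the number of labeled bipartite trees on $(A,B)$ realising these degrees is
\[\binom{b-1}{s_1-1,\ldots,s_a-1}\binom{a-1}{t_1-1,\ldots,t_b-1}.\]
The two multinomial coefficients arise naturally from a bipartite Prüfer encoding of the tree as a pair $(\alpha,\beta) \in A^{b-1}\times B^{a-1}$ in which each vertex $v$ appears $\deg(v)-1$ times; summing over all degree sequences recovers the aggregate $a^{b-1}b^{a-1}$ of Theorem \ref{spanning}(1). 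If a clean citation of this refinement is unavailable, one can verify it by induction on $a$, using the leaf-stripping from the proof of Lemma \ref{lemmaaka} to peel off a pendant subquiver $(i,j_1,\ldots,j_{k+1})$ and descend to the pair $(a-1,k(a-1)+1)$.

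Specialising to $s_i = k+1$ collapses the first multinomial to $\binom{ka}{k,\ldots,k} = (ka)!/(k!)^{a}$, which is independent of the sink degrees, and the multinomial theorem then sums the second multinomial over all positive compositions of $(k+1)a$ into $ka+1$ parts to $(ka+1)^{a-1}$. Hence $N = (ka)!\,(ka+1)^{a-1}/(k!)^{a}$, and routine simplification of $N/(a!\,(ka+1)!)$ telescopes to $(ka+1)^{a-2}/\bigl(a!\,(k!)^{a}\bigr)$, which is precisely the asserted expression.

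The main obstacle is extracting the prescribed-degree bipartite-tree formula: Theorem \ref{spanning}(1) records only the aggregate $\tau(K_{a,b})$, and the bound in part (2) is far too coarse, so one needs either a direct citation of this Scoins-type refinement or a short bijective/inductive proof in a separate lemma. Everything past that step is a transparent multinomial computation with no further subtleties.
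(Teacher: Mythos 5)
Your argument is correct, and the computation checks out: with all source degrees equal to $k+1$ the first multinomial gives $(ka)!/(k!)^a$, the sum of the second over sink degree sequences gives $(ka+1)^{a-1}$ by the multinomial theorem, and $N/\bigl(a!\,(ka+1)!\bigr)=(ka+1)^{a-2}/\bigl(a!\,(k!)^a\bigr)$ is exactly the claimed value of $\mathcal T(a,ka+1)$. However, your route is genuinely different from the paper's. The paper never passes to labelled trees: it counts the automorphism-weighted rooted structures directly by a species-style generating function, gluing subquivers $(i,j_1,\ldots,j_k)$ onto a root sink, so that $y(x)=x\Phi(y(x))$ with $\Phi(x)=\exp(x^k/k!)$, extracts $[x^{ka+1}]y(x)$ by Lagrange inversion, and then divides by $ka+1$ to unroot. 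You instead use the orbit-counting identity $\sum_{[\mathcal Q]}1/|\mathrm{Aut}(\mathcal Q)|=N/(a!(ka+1)!)$ and compute the labelled count $N$ from the degree-refined bipartite Cayley formula (bipartite Pr\"ufer code / Scoins-type refinement). The trade-off: your approach needs that refinement, which Theorem \ref{spanning}(1) does not supply (it records only the aggregate $a^{b-1}b^{a-1}$), so as you note a separate citation or a short Pr\"ufer-code lemma is required; in exchange you obtain the labelled spanning-tree count $\frac{(ka)!}{ka+1}\bigl(\frac{ka+1}{k!}\bigr)^a$ directly, i.e.\ you effectively prove the paper's subsequent corollary first and deduce the proposition from it, whereas the paper argues in the opposite direction and stays self-contained via Lagrange inversion. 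One shared (and harmless) implicit point: both arguments use the converse of Lemma \ref{lemmaaka}, namely that every bipartite tree with $a$ sources of degree $k+1$ and $ka+1$ sinks is indeed a localization data; this is true (for a proper subrepresentation with $a'<a$ sources one has at least $ka'+1$ sinks, forcing smaller slope), and the paper assumes it at the same level of explicitness as you do.
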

\begin{proof}
To construct such uncoloured localization data recursively (taking into account the quiver symmetries) we start with a single sink $j$ which is assigned to be the root. Then we can glue arbitrarily many subquivers of the form $(i,j_1,\ldots,j_k)$ to this and successively to every resulting sink.
  
Let $y(x)$ be the generating function of rooted uncoloured localization data  compatible with $(1\cdot a,1\cdot(ka+1))$ taking into account quiver automorphism and consider
\[\Phi(x):=\exp(\frac{x^k}{k!})=\sum_{n=0}^{\infty}\frac{1}{n!}\left(\frac{x^{k}}{k!}\right)^n.\]
Then $y(x)$ satisfies the functional equation $y(x)=x\Phi(y(x))$. By the Lagrangian inversion formula, see for instance \cite[Section 5.4]{sta}, we thus have:
\[[x^t]y(x)=\frac{1}{t}[u^{t-1}]\Phi(u)^t=\frac{1}{t}[u^{t-1}]\sum_{n=0}^{\infty}\frac{1}{n!}\left(t\frac{u^{k}}{k!}\right)^n=\frac{1}{t}\left\{\begin{matrix}
\frac{1}{\left(\frac{t-1}{k}\right)!}\left(\frac{t}{k!}\right)^{\frac{t-1}{k}}\,\textrm{if }k|(t-1)\\
0\,\textrm{ otherwise} 
\end{matrix}\right.\]
where $[x^t]y(x)$ denotes the $t$-th coefficient of the power series $y(x)$. Thus we have
\[[x^{ka+1}]y(x)=\frac{1}{(ka+1)}\frac{1}{a!}\left(\frac{(ka+1)}{k!}\right)^{a}.\]

Every constructed graph contains a sink which is assigned to be the root vertex. Since every graph has $ka+1$ sinks, the result follows. 
\end{proof}

Since there exist $a!$ possibilities to label the sources and $(ka+1)!$ possibilities to label the sinks we obtain the following corollary:
\begin{kor}
Let $G$ be the complete bipartite graph having $a$ labelled sources and $ka+1$ labelled sinks. Then there exist \[\frac{(ka)!}{ka+1}\left(\frac{ka+1}{k!}\right)^{a}\] spanning trees such that every source has exactly $k+1$ incident edges.
\end{kor}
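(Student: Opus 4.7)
The plan is to convert the weighted enumeration $\mathcal{T}(a,ka+1)$ of isomorphism classes of uncoloured localization data from the preceding proposition into an honest count of labelled spanning trees by a standard orbit-counting argument. By Lemma \ref{lemmaaka}, the uncoloured localization data compatible with $(1\cdot a, 1\cdot(ka+1))$ are precisely the bipartite trees (up to isomorphism) with $a$ sources and $ka+1$ sinks in which each source has exactly $k+1$ incident edges. These are, after forgetting labels, exactly the spanning trees of the complete bipartite graph $K_{a,ka+1}$ that the corollary asks us to enumerate.

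First I would fix an isomorphism class $\mathcal{Q}$ and count how many labelled spanning trees of $K_{a,ka+1}$ project to it. A labelling amounts to choosing a bijection from the sources of $\mathcal{Q}$ to $\{1,\ldots,a\}$ and a bijection from the sinks of $\mathcal{Q}$ to $\{1,\ldots,ka+1\}$, and two such labellings give the same labelled tree if and only if they differ by an element of $\mathrm{Aut}(\mathcal{Q})$, which by definition acts by bipartite-structure-preserving permutations. Hence the number of labelled spanning trees in the isomorphism class of $\mathcal{Q}$ is
\[
\frac{a!\,(ka+1)!}{|\mathrm{Aut}(\mathcal{Q})|}=a!\,(ka+1)!\,w(\mathcal{Q}).
\]

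Summing over isomorphism classes and inserting the formula from the preceding proposition yields
\[
\sum_{\mathcal{Q}} \frac{a!\,(ka+1)!}{|\mathrm{Aut}(\mathcal{Q})|}
= a!\,(ka+1)!\,\mathcal{T}(a,ka+1)
= \frac{(ka+1)!}{(ka+1)^2}\left(\frac{ka+1}{k!}\right)^{a}
= \frac{(ka)!}{ka+1}\left(\frac{ka+1}{k!}\right)^{a},
\]
which is the desired expression. There is no genuine obstacle; the only point requiring a bit of care is to verify that the automorphism group used in the weight $w(\mathcal{Q})=1/|\mathrm{Aut}(\mathcal{Q})|$ of the previous proposition is the same group of bipartite graph automorphisms (preserving the source/sink partition) that acts on labellings, so that the orbit-counting identity is applied with the correct normalisation.
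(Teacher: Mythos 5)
Your proposal is correct and is essentially the paper's own argument: the paper derives the corollary from the proposition by exactly this orbit-counting step ("there exist $a!$ possibilities to label the sources and $(ka+1)!$ possibilities to label the sinks"), with the weight $w(\mathcal{Q})=1/|\mathrm{Aut}(\mathcal{Q})|$ absorbing the overcounting by automorphisms. You merely spell out the computation $a!\,(ka+1)!\,\mathcal{T}(a,ka+1)=\frac{(ka)!}{ka+1}\left(\frac{ka+1}{k!}\right)^{a}$ in more detail than the paper does.
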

It might be possible that this formula is already known, but it could not be found in the literature. For $k=1$ the resulting sequence appears as sequence A066319 in \cite{slo}. It counts labelled structures which are simultaneously trees and cycles, see \cite[Section 2.1]{bll}.
   
Using Theorem \ref{euler}, Remark \ref{bem1} and the results of this section we have:
\begin{satz}
We have
\[\chi(M^s_{(1\cdot a,1\cdot(ka+1))}(\mathcal{N}_m))=m^{(k+1)a}\frac{(ka)!}{ka+1}\left(\frac{ka+1}{k!}\right)^{a}\]
\end{satz}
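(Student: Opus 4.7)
The plan is a direct assembly of the results just established in this section. First, specialize Remark \ref{bem1} to the trivial partition $(1\cdot a,\,1\cdot (ka+1))$: every vertex of the underlying quiver has level one, hence $v(q)=\prod_{q'\in N_q}l(q')=1$ and $v((\mathcal Q,d))=1$ for every uncoloured localization data. With $\hat a+\hat b-1=a+(ka+1)-1=(k+1)a$, Remark \ref{bem1} collapses to
\[\chi(M^s_{(1\cdot a,\,1\cdot (ka+1))}(\mathcal N_m))=m^{(k+1)a}\,\bigl|\mathcal L_{(a,\,ka+1)}(\mathcal N_m)\bigr|.\]

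Next, I would identify $\mathcal L_{(a,ka+1)}(\mathcal N_m)$ with the set of spanning trees of the complete bipartite graph $K_{a,ka+1}$ on the labelled vertex set $\{i_{1,1},\ldots,i_{1,a}\}\cup\{j_{1,1},\ldots,j_{1,ka+1}\}$ in which every source has degree exactly $k+1$. Lemma \ref{lemmaaka} is exactly the "$\subseteq$" direction. For the reverse inclusion one must check that every such tree defines a stable representation. Since the representation is of type one and every arrow is active, any subrepresentation corresponds to a pair $(T,S')$ with $T\subseteq I$, $S'\subseteq J$ and $N(T)\subseteq S'$, and for stability it suffices to handle the extremal case $S'=N(T)$ with $\emptyset\neq T\subsetneq I$. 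The subgraph induced on $T\cup N(T)$ is a subforest of the tree containing all $(k+1)|T|$ edges incident to $T$, so $|N(T)|\geq k|T|+1$. Therefore $a|N(T)|\geq a(k|T|+1)>(ka+1)|T|$ whenever $|T|<a$, which is exactly the required stability inequality; the remaining subrepresentations with $T=\emptyset$ trivially satisfy stability.

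With the identification in place, the corollary preceding the theorem supplies the count immediately: there are exactly $\frac{(ka)!}{ka+1}\bigl(\frac{ka+1}{k!}\bigr)^{a}$ such labelled spanning trees. Substituting this into the expression from the first step yields the claimed identity. The only real content beyond citing earlier results is the converse direction in the identification step: this is a short combinatorial estimate, but it is the one place where stability must actively be verified, since Lemma \ref{lemmaaka} is stated only as a necessary condition on localization data. Once that is in hand, the rest of the proof is pure bookkeeping of labels, colourings, and levels.
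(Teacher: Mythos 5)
Your proposal is correct and follows essentially the same route as the paper: specialize Remark \ref{bem1} to the trivial partition (all levels one, so $v\equiv 1$ and the power of $m$ is $(k+1)a$), identify the uncoloured localization data with the labelled spanning trees of $K_{a,ka+1}$ whose sources all have degree $k+1$, and plug in the count from the corollary preceding the theorem. The one thing you add beyond the paper's one-line assembly is the explicit verification of the converse inclusion (that every such degree-constrained tree is stable, via subrepresentations $(T,N(T))$ and the forest bound $|N(T)|\geq k|T|+1$), which the paper leaves implicit; your argument for it is sound.
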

We are also interested in the contribution of the summand corresponding to the trivial partition to the Euler characteristic of Kronecker moduli spaces:
\begin{kor}
We have
\[\lim_{a\rightarrow\infty}\frac{1}{a}\ln\left(\frac{\chi(M^s_{(1\cdot a,1\cdot (ka+1))}(\mathcal{N}_m)}{a!(ka+1)!}\right)=\ln(m)(k+1)+1-\ln((k-1)!).\]
\end{kor}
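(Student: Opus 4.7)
The plan is to substitute the explicit closed form supplied by the preceding theorem and then perform a routine asymptotic analysis via Stirling's formula. There is no conceptual difficulty here, only some bookkeeping of the divergent logarithmic terms which must cancel against each other.

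First, I would insert
\[
\chi(M^s_{(1\cdot a,1\cdot(ka+1))}(\mathcal{N}_m))=m^{(k+1)a}\frac{(ka)!}{ka+1}\left(\frac{ka+1}{k!}\right)^{a}
\]
into the numerator and simplify the denominator using the identity $(ka+1)!=(ka+1)\cdot(ka)!$. The factor $(ka)!$ cancels and one power of $(ka+1)$ is absorbed, leaving
\[
\frac{\chi(M^s_{(1\cdot a,1\cdot (ka+1))}(\mathcal{N}_m))}{a!\,(ka+1)!}=\frac{m^{(k+1)a}(ka+1)^{a-2}}{a!\,(k!)^a}.
\]

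Second, I would take $\tfrac{1}{a}\ln$ of both sides, obtaining
\[
(k+1)\ln m+\frac{a-2}{a}\ln(ka+1)-\frac{1}{a}\ln(a!)-\ln(k!).
\]
By Stirling, $\tfrac{1}{a}\ln(a!)=\ln a-1+O(\tfrac{\ln a}{a})$, so the two middle terms combine as
\[
\frac{a-2}{a}\ln(ka+1)-\ln a+1+o(1)=\ln\!\left(\frac{ka+1}{a}\right)+1+o(1)\longrightarrow \ln k+1.
\]
Here the divergent summand $\ln(ka+1)$ is absorbed into $\ln a$, which is the only point requiring care.

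Third, I would collect everything to get the limit $(k+1)\ln m+\ln k+1-\ln(k!)$ and use $\ln k-\ln(k!)=-\ln((k-1)!)$ to obtain the stated value $(k+1)\ln m+1-\ln((k-1)!)$. The whole argument is a few lines of elementary asymptotics; the main (and only) subtlety is recognising the cancellation between $\ln(ka+1)$ and the $\ln a$ produced by Stirling, which turns an apparently divergent expression into a convergent one.
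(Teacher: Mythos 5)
Your proposal is correct and follows essentially the same route as the paper: substitute the closed formula from the preceding theorem, cancel $(ka)!$ against $(ka+1)!$, and apply Stirling's formula to $a!$ so that $\ln(ka+1)-\ln a\to\ln k$, yielding $(k+1)\ln m+1-\ln((k-1)!)$. Your version just spells out the bookkeeping that the paper compresses into one displayed approximation.
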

\begin{proof}
Using the Stirling formula for $a\gg 0$ we have
\[\frac{1}{a!(ka+1)!}\frac{(ka)!}{ka+1}\left(\frac{ka+1}{k!}\right)^a\approx\frac{\exp(a)}{(k!)^a}\left(\frac{ka+1}{a}\right)^a\\
=\frac{\exp(a)}{((k-1)!)^a}.
\]
\end{proof}
Define $i_m(k):=\ln(m)(k+1)+1-\ln((k-1)!)$. One can check that $g_m(k)>i_m(k)>f_m(k)$ for fixed $m$. This also means that, if the Douglas' conjecture is true, it does not suffice to consider the summand corresponding to the trivial partition in order to investigate the asymptotic behaviour of the Euler characteristic exactly.

\end{document}